
\documentclass{aic}

\usepackage{tikz}
\usetikzlibrary{decorations.pathreplacing} 
\usepackage{cleveref}
\usepackage{xspace}

\newtheorem{thm}{Theorem}[section]
\newtheorem{lem}[thm]{Lemma}
\newtheorem{prop}[thm]{Proposition}
\newtheorem{claim}[thm]{Claim}
\newtheorem{conj}[thm]{Conjecture}
\newtheorem{cor}[thm]{Corollary}
\newtheorem{obs}[thm]{Observation}

\Crefname{thmsign}{Theorem}{Theorems}
\Crefname{lem}{Lemma}{Lemmas}
\Crefname{prop}{Proposition}{Propositions}
\Crefname{conj}{Conjecture}{Conjectures}
\Crefname{cor}{Corollary}{Corollaries}
\Crefname{prob}{Problem}{Problems}

\newcommand{\bN}{\mathbb{N}}
\newcommand{\cF}{\mathcal{F}}
\newcommand{\cG}{\mathcal{G}}
\newcommand{\tG}{{G}^*}
\newcommand{\ttG}{\widetilde{G}}

\newcommand{\ptg}[1]{CPR\xspace}
\newcommand{\ptt}{pentagonal Tur\'an\xspace}
\newcommand{\floor}[1]{\left\lfloor #1 \right \rfloor}
\newcommand{\ceil}[1]{\left\lceil #1 \right \rceil}

\newcommand{\subs}{\subseteq}

\aicAUTHORdetails{%
  title = {Exact Stability for Tur\'an's Theorem}, 
  author = {D\'{a}niel Kor\'{a}ndi, Alexander Roberts, and Alex Scott},
  plaintextauthor = {Daniel Korandi, Alexander Roberts, Alex Scott},
    %
    %
  plaintexttitle = {Exact Stability for Turan's Theorem}, 
    %
    %
    %
   %
  keywords = {stability theorem, extremal graph theory},
}   

\aicEDITORdetails{%
   year={2021},
   number={9},
   received={5 December 2020},   
   published={29 December 2021},  
   doi={10.19086/aic.31079},      
}   

\begin{document}

\begin{frontmatter}[classification=text]


\author[dk]{D\'{a}niel Kor\'{a}ndi\thanks{Supported by SNSF Postdoc.Mobility Fellowship P400P2\_186686.}}
\author[ar]{Alexander Roberts}
\author[as]{Alex Scott}

\begin{abstract}
Tur\'an's Theorem says that an extremal $K_{r+1}$-free graph is $r$-partite. The Stability Theorem of Erd\H{o}s and Simonovits shows that if a $K_{r+1}$-free graph with $n$ vertices has close to the maximal $t_r(n)$ edges, then it is close to being $r$-partite.
In this paper we determine exactly the $K_{r+1}$-free graphs with at least $m$ edges that are farthest from being $r$-partite, for any $m\ge t_r(n) - \delta_r n^2$.
This extends work by Erd\H{o}s, Gy\H{o}ri and Simonovits, and proves a conjecture of Balogh, Clemen, Lavrov, Lidick\'y and Pfender.
\end{abstract}
\end{frontmatter}

\section{Introduction}

Tur\'an's classical theorem \cite{T41} from 1941 says that a $K_{r+1}$-free $n$-vertex graph maximizing the number of edges (an \emph{extremal graph}) is $r$-partite; the $r=2$ case was established earlier by Mantel \cite{M07}, in 1907.  The only extremal $n$-vertex graph is the \emph{Tur\'an graph} $T_r(n)$, the complete $r$-partite graph with parts of size $\floor{n/r}$ or $\ceil{n/r}$, which has $t_r(n) = \left(1-\frac{1}{r} + o(1)\right)\binom{n}{2}$ edges. Tur\'an's Theorem lay the foundations of extremal graph theory, and has been highly influential in the field ever since.

One of the early discoveries related to Tur\'an's Theorem was that if a $K_{r+1}$-free graph is ``close'' to extremal in the number of edges, then it must be ``close'' to the Tur\'an graph in its structure. Indeed, the famous Stability Theorem of Erd\H{o}s and Simonovits \cite{E66, S66} from the 1960s implies the following: if $G$ is a $K_{r+1}$-free $n$-vertex graph with $t_r(n)-o(n^2)$ edges, then it can be made into the Tur\'an graph $T_r(n)$ by changing only $o(n^2)$ edges.
It is of little surprise that this powerful structural description of near-extremal graphs has seen many important applications and consequences over the past decades (e.g. \cite{ABKS04,BBS04,S14,S09}). 

An alternative form of stability for Tur\'an's Theorem is to look at the distance from being $r$-partite (rather than the distance to a specific $r$-partite graph, namely the Tur\'an graph). Thus we are looking for a large $r$-partite subgraph, which is what is wanted for most applications.
The two problems are equivalent if we are only looking for a $o(n^2)$ bound on the distance. However, for graphs that are closer to extremal, we can obtain more structural information by measuring the distance from being $r$-partite. 
For example, if we move a constant number of vertices from a smallest vertex class to a largest vertex class of $T_r(n)$  then the resulting graph has $t_r(n)-O(1)$ edges but distance $\Omega(n)$ from the Tur\'an graph.   In contrast, a $K_{r+1}$-free graph on $n$ vertices with at least $t_r(n)-c_rn$ edges must already be $r$-partite.  This phenomenon was first studied by Simonovits \cite{S69} and later by many other authors \cite{B81,HT91,KP05,AFGS13,TU15}.   
A tight result was proved by Brouwer \cite{B81}:

\begin{thm}[\cite{B81}] \label{thm:rpartstab}
Let $r\ge 2$ and $n\ge 2r+1$ be integers. Every $K_{r+1}$-free graph with at least $t_r(n) - \floor{n/r} +2$ edges is $r$-partite.
\end{thm}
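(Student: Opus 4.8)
The plan is to prove the contrapositive by induction on $n$: every $K_{r+1}$-free graph $G$ on $n\ge 2r+1$ vertices that is \emph{not} $r$-partite has $e(G)\le t_r(n)-\floor{n/r}+1$. Write $D:=t_r(n)-t_r(n-1)=n-\ceil{n/r}$ for the minimum degree of $T_r(n)$, and suppose for contradiction that some such $G$ has $e(G)\ge t_r(n)-\floor{n/r}+2$. Two preliminaries. First, we may assume $K_r\subs G$: otherwise $e(G)\le t_{r-1}(n)$, and $t_r(n)-t_{r-1}(n)>\floor{n/r}$ for every $n\ge 2r+1$ (a routine estimate). Second, the base case $r=2$ is the classical bound that a triangle-free non-bipartite graph on $n$ vertices has at most $\floor{(n-1)^2/4}+1\le t_2(n)-\floor{n/2}+1$ edges: take a shortest (hence induced) odd cycle $C$, of length $2k+1\ge 5$, check that every vertex outside $C$ has at most two neighbours on $C$, and add up $e(G[C])=2k+1$, $e(C,V\setminus C)\le 2(n-2k-1)$ and $e(G-C)\le t_2(n-2k-1)$, the sum being a decreasing function of $k\ge 2$. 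The finitely many pairs with $n=2r+1$ (where $n-1<2r+1$) are checked by hand.

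For the inductive step let $r\ge 3$ and let $v$ be a vertex of minimum degree. \textbf{Case 1: $\deg(v)\le D-1$.} Since $t_r(n)=t_r(n-1)+D$,
\[
e(G-v)=e(G)-\deg(v)\ge t_r(n-1)+D-\floor{n/r}+2-(D-1)=t_r(n-1)-\floor{n/r}+3\ge t_r(n-1)-\floor{(n-1)/r}+2 .
\]
If $G-v$ is not $r$-partite, the induction hypothesis gives $e(G-v)\le t_r(n-1)-\floor{(n-1)/r}+1$, so $e(G)\le t_r(n-1)-\floor{(n-1)/r}+1+(D-1)=t_r(n)-\floor{(n-1)/r}\le t_r(n)-\floor{n/r}+1$, a contradiction. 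Hence $G-v$ \emph{is} $r$-partite, with parts $V_1,\dots,V_r$; the displayed bound shows $e(G-v)$ is within $O(n/r)$ of $t_r(n-1)$, so $G-v$ is ``close'' to $T_r(n-1)$. If $v$ had a non-neighbour in every $V_i$, adding $v$ to a suitable part would make $G$ $r$-partite; so $v$ is complete to some part, say $V_1$, and likewise $v$ has a neighbour in every $V_i$ (else it could be moved to a part disjoint from $N(v)$). Using the closeness of $G-v$ to $T_r(n-1)$, we find in $G-v$ a copy of $K_{r-1}$ with one vertex in each of $V_2,\dots,V_r$, all adjacent to $v$, and a common neighbour $x\in V_1$ of this clique; since $v$ is complete to $V_1\ni x$, the set $\{v,x\}$ together with the $K_{r-1}$ spans $K_{r+1}$ in $G$, a contradiction.

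\textbf{Case 2: $\deg(v)\ge D$, i.e.\ $\delta(G)\ge n-\ceil{n/r}$.} For $n>r(3r-1)$ this exceeds $\frac{3r-4}{3r-1}n$, so the Andr\'asfai--Erd\H{o}s--S\'os theorem forces $G$ to be $r$-partite, a contradiction. For the finitely many remaining values $2r+1\le n\le r(3r-1)$ one argues directly: fix a $K_r$ on a set $S$, partition $V(G)\setminus S$ according to a non-neighbour in $S$, use $\delta(G)\ge n-\ceil{n/r}$ to bound the classes and the non-edges between them, and fix up the few intra-class edges by local exchanges to reach an $r$-partition of $G$, again a contradiction.

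The crux — and where the exact constant $\floor{n/r}-1$ really enters — is the last step of Case 1 when $\deg(v)$ is close to $D-1$: then $G-v$ may be short of $T_r(n-1)$ by as many as $\approx n/r$ edges, so the $K_{r-1}$ and the vertex $x$ cannot be chosen greedily, and one must instead iterate — repeatedly re-partitioning $G-v$, each time recording that $v$ is complete to \emph{some} part — until the structure is rigid enough to force the $K_{r+1}$ (or until $e(G-v)$ can be bounded below the threshold directly). Carrying this casework through uniformly, rather than any single clever step, is the main obstacle. An equivalent route, which I expect to meet the same difficulty, avoids vertex deletion and instead studies a vertex-minimal non-$r$-partite subgraph $H\subs G$ — which is $(r+1)$-vertex-critical, hence has $|V(H)|\ge r+3$ as $G$ omits $K_{r+1}$ — together with a Tur\'an-type count on $V(G)\setminus V(H)$.
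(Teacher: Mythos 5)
This theorem is cited from Brouwer in the paper and not proved there, so there is no in-paper argument to compare against; judged on its own terms, your proposal has a reasonable skeleton (vertex-deletion induction plus Andr\'asfai--Erd\H{o}s--S\'os in the high-minimum-degree case) but two genuine gaps, one of which you concede yourself. In Case 1, after correctly forcing $G-v$ to be $r$-partite, the step ``if $v$ had a non-neighbour in every $V_i$, adding $v$ to a suitable part would make $G$ $r$-partite'' is false as stated: to place $v$ into $V_i$ you need $v$ to have \emph{no} neighbour in $V_i$, not merely one non-neighbour, so the conclusion that $v$ is complete to some part does not follow from the partition alone (if it is to be salvaged at all, it must come out of the edge count, which you never use here). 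More seriously, the actual contradiction of Case 1 --- a transversal $K_{r-1}$ inside $N(v)$ together with a common neighbour $x\in V_1$ giving a $K_{r+1}$ --- is exactly the step you admit cannot be done greedily, since $G-v$ may be missing $\approx n/r$ crossing edges while $v$ may have only one neighbour in some parts; ``iterate, repeatedly re-partitioning, until the structure is rigid enough'' is a restatement of the difficulty, not an argument. Since this is precisely where the exact constant $\floor{n/r}-1$ enters, the heart of the proof is missing.

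Case 2 also contains a real hole rather than an unfinished computation. For $2r+1\le n\le r(3r-1)$ you propose to deduce $r$-partiteness from $\delta(G)\ge n-\ceil{n/r}$, a fixed $K_r$, and ``local exchanges'', without invoking the edge bound. That cannot work: the blowup $L_3[1,1,1,1,1,3]$ on $8$ vertices (all five pentagon classes singletons, one further class of size $3$) is $5$-regular, $K_4$-free and not $3$-partite, and $5=8-\ceil{8/3}$, so it satisfies your Case 2 degree hypothesis in this range; only the edge count separates it from the theorem's hypothesis, as it has $20$ edges while the threshold is $t_3(8)-\floor{8/3}+2=21$. So any argument in this range must use $e(G)\ge t_r(n)-\floor{n/r}+2$ explicitly, and note that this range, like the case $n=2r+1$, is an infinite family (one interval per $r$), so neither can be ``checked by hand''. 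The base case $n=2r+1$ itself is actually harmless, but for a reason you should state: $\floor{(2r+1)/r}=2$ makes the threshold equal to $t_r(2r+1)$, so Tur\'an's theorem (uniqueness of the extremal graph) settles it immediately. The $r=2$ base case via a shortest odd cycle is fine, as is the reduction showing $G-v$ is $r$-partite in Case 1 and the application of Andr\'asfai--Erd\H{o}s--S\'os for $n>r(3r-1)$.
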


Let $f_r(n,t)$ be the smallest number such that any $K_{r+1}$-free graph $G$ with at least $t_r(n)-t$ edges can be made $r$-partite by deleting at most $f_r(n,t)$ edges.   For fixed $r$, \Cref{thm:rpartstab} tells us that $f_r(n,t)=0$ for $t\le n/r+O(1)$, while the Stability Theorem tells us that $f_r(n,t)=o(n^2)$ if $t=o(n^2)$.  But what happens in between?  Better estimates of this function have only been obtained fairly recently. In a short and elegant paper, F\"uredi \cite{F15} proved that $f_r(n,t)\le t$. Later, Roberts and Scott \cite{RS18} showed that $f_r(n,t) = O(t^{3/2}/n)$ when $t\le\delta n^2$, and that this bound is tight up to a constant factor (in fact, they proved much more general results for $H$-free graphs, where $H$ is edge-critical). Very recently, Balogh, Clemen, Lavrov, Lidick\'y and Pfender \cite{BCLLP} determined $f_r(n,t)$ asymptotically, and made a conjecture on its exact value.  The main aim of this paper is to prove their conjecture. 

When $r=2$, the exact stability problem was already solved by Erd\H{o}s, Gy\H{o}ri and Simonovits \cite{erdoscan'tmaths}: they proved that for $t\le n^2/20$ the worst triangle-free graph, defining $f_2(n,t)$, is a blowup of $C_5$. One can generalize this construction to obtain a family of $K_{r+1}$-free graphs with many edges as follows. Consider a complete $(r-1)$-partite graph with parts $Z,Z_3,\dots,Z_r$, and insert a blowup of $C_5$ on $Z$ with independent sets $X, Y_1, Y_2, Z_1, Z_2$ as in \Cref{fig:ptg} (so $Z= X\cup Y_1\cup Y_2\cup Z_1\cup Z_2$). We will call this a \emph{\ptt graph} if it further satisfies $|X| \le |Y_1| = |Y_2| \le |Z_i|$ for every $i\in[r]$, and each of the sets $X\cup Y_1\cup Z_1,X\cup Y_2\cup Z_2, Z_3,\dots,Z_r$ has size $\floor{\frac{n+|X|}{r}}$ or $\ceil{\frac{n+|X|}{r}}$.

\begin{figure}[t!]
\begin{center}
\begin{tikzpicture}[scale=1.25]

\draw[fill=white, rounded corners=2mm] (0,3.5) -- (0,4.5) -- (2,6.25) -- (9,6.25) -- (9,3.5) -- cycle;

\draw[fill=black, fill opacity=.1] (0.3,1) rectangle (8.5,3.5);
\draw[fill=black, fill opacity=.1] (0.35,4.5) -- (2.35,5.25) -- (3.6,5.25) -- (1.25,4.5) -- cycle;
\draw[fill=black, fill opacity=.1] (2.35,5.25) -- (3.6,5.25) -- (8.5,4.5) -- (5,4.5) -- cycle;
\draw[fill=black, fill opacity=.1] (8.5,4.5) -- (5,4.5) -- (5,5.25) -- (8.5,5.25) -- cycle;
\draw[fill=black, fill opacity=.1] (5,5.25) -- (8.5,5.25) -- (3.6,4.5) -- (2.35,4.5) -- cycle;

\draw[fill=black, fill opacity=.1] (1.5,3.8) rectangle (2,4.3);

\draw[fill=white, rounded corners=2mm] (0,0) rectangle (9,1); 
\node at (4.5,0.5) {$|Z_4|=x+y+z$};
\draw[fill=white, rounded corners=2mm] (0,1.75) rectangle (9,2.75); 
\node at (4.5,2.25) {$|Z_3|=x+y+z$};
\draw[fill=white, rounded corners=2mm] (0.1,3.6) rectangle (1.5,4.5); 
\node at (0.8,4) {$|X| =x_{\phantom{2}}$};
\draw[fill=white, rounded corners=2mm] (2,3.6) rectangle (4,4.5); 
\node at (3,4) {$|Y_2|=y$};
\draw[fill=white, rounded corners=2mm] (4.5,3.6) rectangle (8.9,4.5); 
\node at (6.75,4) {$|Z_2|=z$};
\draw[fill=white, rounded corners=2mm] (2,5.25) rectangle (4,6.15); 
\node at (3,5.75) {$|Y_1|=y$};
\draw[fill=white, rounded corners=2mm] (4.5,5.25) rectangle (8.9,6.15); 
\node at (6.75,5.75) {$|Z_1|=z$};

\draw[decoration={brace,mirror,amplitude=5pt,raise=-2pt},decorate] (9.7,3.375) -- node[right=2pt] {$Z$} (9.7,6.375);

\end{tikzpicture}
\caption{A \ptt graph with $r=4$}
\label{fig:ptg}
\end{center}
\end{figure}

Balogh, Clemen, Lavrov, Lidick\'y and Pfender \cite{BCLLP} conjectured that $f_r(n,t)$ is witnessed by a \ptt graph if $t$ is small enough. 
Our main result is a proof of their conjecture.  
For a graph $G$ and integer $r\ge2$, let
$D_r(G)$ be the minimum number of edges that must be removed from $G$ to make it $r$-partite.  We prove the following theorem.

\begin{thm}\label{pentstab}
For every $r \ge 2$ there is a $\delta_r > 0$ such that the following holds: If $G$ is a $K_{r+1}$-free graph on $n$ vertices with $e(G) \ge t_r(n) - \delta_r n^2$ edges, then there is a \ptt graph $\tG$ on $n$ vertices with $e(\tG) \ge e(G)$ and $D_r(\tG) \ge D_r(G)$.
\end{thm}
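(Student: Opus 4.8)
The plan is to reduce to an extremal configuration, read off its rough shape from stability, use $K_{r+1}$-freeness to force the non-$r$-partite part to be a blow-up of $C_5$ inside a single vertex class, and then solve a discrete optimisation to identify the extremal graph exactly as a \ptt graph.

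\emph{Step 1: reduction and stability.} First I would pass to an extremal graph: replace the input graph by one --- call it again $G$ --- maximising $e(\cdot)$ among all $K_{r+1}$-free $n$-vertex graphs with $D_r(\cdot)$ at least the original value (and, subject to that, extremal for some secondary quantity). A \ptt graph dominating this $G$ in both $e$ and $D_r$ also dominates the original graph, so it suffices to treat this $G$. In particular $G$ is then maximal $K_{r+1}$-free, since adding an edge would raise $e(G)$ without lowering $D_r(G)$. Writing $t:=t_r(n)-e(G)\le\delta_r n^2$, a standard stability computation (via the Stability Theorem and F\"uredi's bound $f_r(n,t)\le t$) shows that a minimum $r$-cut $V_1,\dots,V_r$ has exactly $b:=D_r(G)\le t$ \emph{bad} edges (those inside the $V_i$); that $\sum_i(|V_i|-n/r)^2=O(t)$; that the number $m$ of non-adjacent cross-pairs is at most $b+t\le 2t$; that every vertex has at least as many neighbours in each other part as inside its own; and that, writing $\mathrm{def}(v)$ for the number of non-neighbours of $v$ outside its own part, one has $\sum_v\mathrm{def}(v)=2m\le 4t$, so all but $O(\sqrt t)$ vertices have $\mathrm{def}(v)<\sqrt t$.

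\emph{Step 2: a blow-up of $C_5$ appears.} Here $K_{r+1}$-freeness does the structural work. I would first note that a bad edge $uv$ with $\mathrm{def}(u),\mathrm{def}(v)$ both small is impossible: then $u$ and $v$ have large common neighbourhoods in every other part, so greedily choosing a transversal clique $v_2\in V_2,\dots,v_r\in V_r$ through low-deficiency vertices yields a $K_{r+1}$. Running this while forbidding the clique in only one part gives that every bad edge $uv$ has, in some part $V_{j_0}$, almost disjoint neighbourhoods $N(u)\cap V_{j_0}$ and $N(v)\cap V_{j_0}$; and the bad subgraph is triangle-free (forced by $K_{r+1}$-freeness and extremality). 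Propagating the ``almost disjoint in some part'' relation through the dense cross-structure, and using that every missing cross-pair must complete a $K_{r+1}$, the argument should show that the bad edges may be taken inside one class $Z:=V_1$, that $Z$ splits by adjacency pattern into five classes $X,Y_1,Z_1,Z_2,Y_2$ joined cyclically as a blow-up of $C_5$ and each joined completely to $V_2,\dots,V_r$, and that no longer non-bipartite pattern --- a blow-up of a longer odd cycle, a Petersen-type graph, or a mess split over two classes --- can occur; this last point rests on a tight ``edges versus non-bipartiteness'' inequality for triangle-free graphs, essentially the $r=2$ case of Erd\H{o}s, Gy\H{o}ri and Simonovits, complicated here because enlarging $Z$ unbalances the rest of $G$.

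\emph{Step 3: exact cleanup and optimisation.} Extremality is used once more --- no missing cross-pair can be added, no lower-degree vertex can be re-attached as a twin of a non-adjacent higher-degree vertex in its class without creating a $K_{r+1}$ or dropping $D_r$, and so on --- to pin $G$ down exactly as the complete $(r-1)$-partite graph on $Z,Z_3,\dots,Z_r$ together with a blow-up of $C_5$ on $Z$ with classes $X,Y_1,Z_1,Z_2,Y_2$ in cyclic order; for such a graph $D_r(G)$ is the smallest of the five cyclic products $|X||Y_1|,|Y_1||Z_1|,|Z_1||Z_2|,|Z_2||Y_2|,|Y_2||X|$. It then remains to maximise this quantity over the class sizes and over $|Z_3|,\dots,|Z_r|$ subject to $e(G)\ge t_r(n)-t$: an exchange/convexity computation, with floors and ceilings tracked, should give that at the optimum $|X|\le|Y_1|=|Y_2|\le|Z_i|$ for all $i$ and each of $X\cup Y_1\cup Z_1$, $X\cup Y_2\cup Z_2$, $Z_3,\dots,Z_r$ has size $\floor{(n+|X|)/r}$ or $\ceil{(n+|X|)/r}$, i.e.\ $G$ is a \ptt graph, so we may take $\tG=G$.

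\emph{Where the difficulty lies.} The hard part will be the combination of the sharp triangle-free estimate ruling out every non-bipartite pattern except one balanced blow-up of $C_5$ --- the coupling with the global balance of $G$ keeps this from being a black-box appeal to the $r=2$ result --- together with the exact solution of the resulting discrete optimisation. A secondary subtlety is that the natural symmetrisation moves, cloning a vertex inside a class in particular, need not be monotone in $D_r(\cdot)$; this is why the proof is organised around the single global extremal choice of $G$ rather than around a step-by-step compression.
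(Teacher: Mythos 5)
Your proposal takes a genuinely different route from the paper and, as written, has a real gap at its heart. You pass to an extremal $G$ (maximizing $e$ subject to $D_r(\cdot)\ge D_0$ and $K_{r+1}$-freeness) and then try to classify that extremal $G$ as a blow-up of $L_r$, i.e.\ a complete $(r-1)$-partite graph with a $C_5$-blowup inserted into one part. That would prove a \emph{stronger} statement than \Cref{pentstab} and the paper never attempts it. Instead the paper works with an arbitrary $G$: it finds a max-cut $r$-partition in which the residual graph $G-S$ has large minimum degree (so is $r$-partite by Andr\'asfai--Erd\H{o}s--S\'os), extracts a maximum-internal-degree vertex $u$ and a matching $M$ in the internal graph $H$ (via a Tutte--Berge/Chv\'atal--Hanson bound), and then directly \emph{defines} a \ptg{r} graph $\tG=L_r[k,\Delta,\Delta,n_1,\dots,n_r]$ from the combinatorial data $(k,\Delta,\kappa_i,a_i,b_i,c_i)$. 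The comparison $e(\tG)\ge e(G)$ is done by a careful count of missing crossing edges (\Cref{doubleup,restmissing}), not by showing $G$ itself has any $C_5$-blowup substructure.

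The gap in your Step 2 is concrete. You assert without proof (i) that in the extremal $G$ the internal (``bad'') edges may be taken to lie in a single class $V_1$, (ii) that the internal graph is triangle-free, and (iii) that $V_1$ splits into five sets giving a $C_5$-blowup, with every other triangle-free, non-bipartite pattern excluded. None of these is a routine stability consequence. In the paper's analysis the internal matching $M$ is distributed over \emph{all} parts $V_i$ (the sets $M_i=A_i\cup B_i\cup C_i$ are nonempty across $i\in[r]$), and much of the work in \Cref{restmissing} --- the good/bad split $A_i=A^g_i\cup A^b_i$, the choice of $i'$ minimizing $|\Gamma(v)\cap\Gamma(v')\cap R_{i'}|$, the deficit parameter $\tau$, and the triangle common-neighborhood argument via \Cref{folklore} --- is precisely what handles the scenario you wave away. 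Your ``propagating the almost-disjointness relation'' step, which would have to rule out, say, two disjoint $C_5$-blowups in two different classes, or a longer odd cycle blowup, is where the argument would actually live, and you supply no mechanism for it. In short: if you could carry out the classification in Step 2, Steps 1 and 3 would be fine (the reduction is sound, and the optimization is exactly what the paper's short derivation of \Cref{pentstab} from \Cref{thm:weakpentstab} does). But Step 2 is not a detail --- it is the whole theorem --- and the approach you sketch for it would need to be rebuilt essentially from scratch to cover internal edges spread over several parts and to pin down the $C_5$ shape without circularity. The paper's construct-and-compare strategy is deliberately designed to avoid exactly this classification step.
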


\medskip

The rest of the paper is organized as follows. In \Cref{sec:tools}, we present a brief overview of the proof, and collect some necessary tools. We need a special argument when the number of edges in $G$ is very close to $t_r(n)$, and the short proof of this case is presented in \Cref{sec:dense}. \Cref{sec:mainproof} contains the general argument of the proof of \Cref{pentstab}. We finish the paper with some discussion and open problems in \Cref{sec:conclusion}.

We follow standard notation throughout. $G$ is always a simple graph with vertex set $V(G)$ and edge set $E(G)$. The number of edges is denoted by $e(G)=|E(G)|$. We write $\Gamma_G(v)\subs V(G)$ to denote the neighborhood of a vertex $v\in V(G)$, and $d_G(v)=|\Gamma_G(v)|$ to denote its degree. When the graph in question is clear, we may omit the subscript. For a set of vertices $S\subs V(G)$, we write $G-S$ for the subgraph induced on $V(G)\setminus S$. When $S=\{v\}$, we simply write $G-v$.

\section{Overview and tools} \label{sec:tools}

Given an $r$-partition of the vertices of a graph $G$, we say that an edge connecting different parts is \emph{crossing}, and an edge connecting vertices in the same part is \emph{internal}.
So $D_r(G)$ is the minimum number of internal edges in an $r$-partition of the vertices of $G$.

In their proof of the triangle-free case of \Cref{pentstab}, Erd\H{o}s, Gy\H{o}ri and Simonovits \cite{erdoscan'tmaths} start with a close to optimal bipartition of $G$, and construct a \ptt graph (in this case, a blowup of $C_5$) with the same number of internal edges, but more crossing edges. An important
 idea in their proof is to find a large matching of internal edges: as $G$ is triangle-free, this can be used to show that many crossing edges are missing from $G$.

Our proof for the general case follows a similar spirit, although we need to work harder to find the necessary missing edges when $K_{r+1}$ is forbidden instead of $K_3$.

We will need several estimates comparing Tur\'an numbers $t_r(n)$ for various $r$ and $n$. Recall that $t_r(n)$ is the number of edges in the Tur\'an graph $T_r(n)$, which is the complete $r$-partite graph on an \emph{$r$-equipartitioned} vertex set, i.e., when each part has size $\floor{n/r}$ or $\ceil{n/r}$. It is easy to see that $t_r(n) \ge t_r(n-1) + \frac{r-1}{r}(n-1)$, by adding a vertex to a smallest part of $T_r(n-1)$. Similarly, $t_r(n-1) \ge t_r(n) - \frac{r-1}{r}n$ can be obtained by deleting a vertex from a largest part of $T_r(n)$. 

The next lemma follows from these inequalities by iterating them, and by noting that $t_r(n)$ is the unique integer between $t_r(n-1) + \frac{r-1}{r}(n-1)$ and $t_r(n-1) + \frac{r-1}{r}n$.

\begin{lem} \label{lem:turan}
Let $r\ge 2$ and $n$ be integers. Then:
\begin{enumerate}
 \item $t_r(n) = t_r(n-1) + \ceil{\frac{r-1}{r}(n-1)} = t_r(n-1) + \floor{\frac{r-1}{r}n}$,
 \item $t_r(n') + \frac{r-1}{r}n(n-n') \ge t_r(n) \ge t_r(n') + \frac{r-1}{r}n'(n-n')$,\quad for every $n'\le n$,
 \item $\frac{r-1}{r}\binom{n+1}{2} \ge t_r(n) \ge \frac{r-1}{r}\binom{n}{2}$.
\end{enumerate}
\end{lem}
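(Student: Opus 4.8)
The plan is to establish part (1) first and then obtain parts (2) and (3) by iterating it, following the route sketched in the paragraph preceding the lemma. For part (1) I would take as given the two inequalities already recorded there: $t_r(n) \ge t_r(n-1) + \frac{r-1}{r}(n-1)$ (add a vertex to a smallest class of $T_r(n-1)$) and $t_r(n) \le t_r(n-1) + \frac{r-1}{r}n$ (delete a vertex from a largest class of $T_r(n)$). Together these pin $t_r(n)$ inside a closed interval of length $\frac{r-1}{r} < 1$, which therefore contains a unique integer --- necessarily $t_r(n)$ itself. So it is enough to check that the integer $t_r(n-1) + \ceil{\frac{r-1}{r}(n-1)}$ also lies in this interval: the lower bound is immediate since $\ceil{x} \ge x$, and the upper bound reduces to the inequality $\ceil{\frac{r-1}{r}(n-1)} \le \frac{r-1}{r}n$, which follows from a short computation --- for instance, writing $n-1 = ar + b$ with $0 \le b < r$ and distinguishing the cases $b = 0$ and $1 \le b \le r-1$. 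The same case analysis (or simply the uniqueness just used) shows $\ceil{\frac{r-1}{r}(n-1)} = \floor{\frac{r-1}{r}n}$, giving the second equality in (1).

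For part (2) I would iterate: set $c_k := t_r(k) - t_r(k-1)$, so that part (1) yields $\frac{r-1}{r}(k-1) \le c_k \le \frac{r-1}{r}k$ for every $k$. Telescoping gives $t_r(n) - t_r(n') = \sum_{k=n'+1}^{n} c_k$, and since there are $n - n'$ summands, bounding each $c_k$ below by $\frac{r-1}{r}(k-1) \ge \frac{r-1}{r}n'$ and above by $\frac{r-1}{r}k \le \frac{r-1}{r}n$ produces exactly the two claimed inequalities. Part (3) is then the $n' = 0$ instance of the same telescoping sum (using $t_r(0) = 0$), except that one keeps the per-term bounds $\frac{r-1}{r}(k-1) \le c_k \le \frac{r-1}{r}k$ rather than the crude ones: summing over $k = 1, \dots, n$ gives $\frac{r-1}{r}\binom{n}{2} \le t_r(n) \le \frac{r-1}{r}\binom{n+1}{2}$.

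I do not expect a real obstacle here: the only step needing genuine (though still elementary) care is the floor/ceiling bookkeeping in part (1) --- making sure that $\ceil{\frac{r-1}{r}(n-1)}$, $\floor{\frac{r-1}{r}n}$ and $t_r(n) - t_r(n-1)$ all literally coincide rather than merely being close. The ``unique integer in an interval of length less than $1$'' trick handles this cleanly, after which everything is routine summation.
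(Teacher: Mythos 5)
Your proposal is correct and follows exactly the route the paper indicates (though the paper does not spell it out): part~(1) via the ``unique integer in an interval of length less than~$1$'' observation, and parts~(2) and~(3) by telescoping part~(1). One small simplification worth noting: once you know the interval $\bigl[\tfrac{r-1}{r}(n-1),\,\tfrac{r-1}{r}n\bigr]$ has length $<1$ and contains the integer $t_r(n)-t_r(n-1)$, the identity $t_r(n)-t_r(n-1)=\ceil{\tfrac{r-1}{r}(n-1)}=\floor{\tfrac{r-1}{r}n}$ follows immediately (the ceiling of the left endpoint and the floor of the right endpoint are each the only candidate integer), so the explicit case analysis on $n-1 \bmod r$ can be skipped.
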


To find a large matching among the internal edges, we will use the following lemma, which follows easily from the Tutte-Berge formula (and is a special case of a theorem of Chv\'atal and Hanson \cite{CH76}). We include a sketch of the argument for completeness.

\begin{lem}\label{korandi}
Let $G$ be a graph on $n$ vertices with maximum degree $\Delta$ and let $k\ge 1$ be an integer. If $e(G) > (k-1) \Delta$ and $\Delta \ge 2k-1$, then $G$ contains a matching of size $k$.
\end{lem}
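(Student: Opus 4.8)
The plan is to use the Tutte--Berge formula, which states that the maximum matching size of $G$ equals $\frac12\left(n - \max_{S\subs V(G)}\big(\mathrm{odd}(G-S) - |S|\big)\right)$, where $\mathrm{odd}(H)$ counts the odd components of $H$. So it suffices to show that for every $S\subs V(G)$ we have $\mathrm{odd}(G-S) - |S| \le n - 2k$, i.e., $\mathrm{odd}(G-S) \le n - 2k + |S|$. First I would dispose of the trivial components: any isolated vertex of $G-S$ would have degree at most $|S|$ in $G$, but in fact the cleaner bookkeeping is to split $G-S$ into its components and count vertices versus components.

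The key step is the following counting argument. Fix $S$ with $|S|=s$, and let the components of $G-S$ be $C_1,\dots,C_p$ with $c_i = |C_i|$, of which $q$ are odd; so $\mathrm{odd}(G-S) = q$. Since $\sum c_i = n - s$ and each $c_i \ge 1$, a crude bound gives $q \le n-s$, which is far too weak. The point is to use the edges: the nontrivial components each contribute at least one edge, and more importantly a component of size $c_i$ has at most $\binom{c_i}{2}$ edges, but we should instead bound from below the number of edges ``used up''. Here is the efficient way: I claim $e(G) \le \Delta \cdot (\text{number of components of }G-S\text{ that are not single vertices}) \cdot(\text{something})$ — more precisely, every edge of $G$ either meets $S$ (there are at most $s\Delta$ such) or lies inside some nontrivial component $C_i$. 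A nontrivial component on $c_i$ vertices with maximum degree $\le\Delta$ has at least $\lceil c_i/2\rceil \ge c_i/2$ edges when... hmm, that is not generally true. Instead I would argue: let $a$ be the number of components of $G-S$ that are single vertices and $b$ the number with $\ge 2$ vertices, so $p = a+b$ and $q \le a + b$. Each isolated vertex of $G-S$ has all its $G$-neighbours in $S$; each component with $\ge 2$ vertices contains at least one edge. Summing degrees over the $a$ isolated vertices and over $S$, and using that a vertex has degree $\le\Delta$, I get a lower bound forcing $b$ to be large relative to $e(G)$, hence $q = \mathrm{odd}(G-S) \le \dots$; one then checks the inequality $\mathrm{odd}(G-S) - |S| \le n-2k$ using $e(G) > (k-1)\Delta$ and $\Delta \ge 2k-1$.

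Concretely, the cleanest route: among the $n$ vertices, those not in $S$ and not isolated in $G-S$ number $n - s - a$, and they are distributed among the $b$ nontrivial components, so $b \le n-s-a$ but also, since each such component has $\ge 2$ vertices and $\ge 1$ edge while having max degree $\le \Delta$, a component on $c_i$ vertices has $\ge \lceil c_i/\Delta\rceil \ge c_i/\Delta$... again not matching; the honest statement of Chv\'atal--Hanson / the standard proof uses that a connected graph on $c_i\ge 2$ vertices with max degree $\Delta$ has at least $c_i/\Delta$ edges is false but $\ge c_i - 1$ edges is true, giving $e(G-S) \ge \sum_i (c_i - 1) = (n-s) - p \ge (n-s) - (a+b)$, while $e(G) \le e(G-S) + s\Delta$ is the wrong direction. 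The direction that works: I want an \emph{upper} bound on $q$. Note $q \le a + b$, and $a\Delta \ge (\text{edges from isolated vertices to }S)$... Let me instead bound $q$ directly: if $\mathrm{odd}(G-S) = q$ is large then $G-S$ has many components, so $G-S$ has few edges, so almost all of $e(G)$'s edges meet $S$, giving $e(G) \le s\Delta + e(G-S)$ and $e(G-S) \le \frac12\Delta(n-s-a) \le \frac12\Delta(n - s - (q - b))$; combined with $q \le a+b \le (n-s-2b)+b+b$ one extracts, after substituting $e(G) > (k-1)\Delta$ and $\Delta\ge 2k-1$, the bound $q - s \le n - 2k$.

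The main obstacle, and the part I would be most careful about, is exactly this chain of inequalities: getting the constants to line up so that the hypotheses $e(G) > (k-1)\Delta$ and $\Delta \ge 2k-1$ are used tightly (the second hypothesis is what lets a single component ``absorb'' enough edges without creating too many odd components, and is needed since otherwise each component has $<\Delta$ vertices and the bound degrades). Since the lemma is quoted from Chv\'atal--Hanson \cite{CH76} and only a sketch is promised, I would state the Tutte--Berge reduction, give the component-counting bound on $\mathrm{odd}(G-S) - |S|$ using that $e(G-S)\le \tfrac12\Delta\cdot(n-|S|-\#\{\text{isolated vertices of }G-S\})$ and $e(G)\le e(G-S)+|S|\Delta$, and then verify the resulting numerical inequality, leaving the fully optimized constants to the reference.
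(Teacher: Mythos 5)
The framework is the right one (Tutte--Berge is indeed how the paper proceeds), but the specific edge bound you use is too weak to close the argument, and the internal uncertainty you express (``again not matching'', ``the wrong direction'') is warranted.

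Concretely, your key inequality is $e(G-S) \le \tfrac12\Delta(n-s-a)$, where $a$ is the number of isolated vertices of $G-S$: you charge degree $\le\Delta$ to every non-isolated vertex. Run the numbers. Tutte--Berge gives a set $S$ with $|S|=s$ and $q := \mathrm{odd}(G-S) \ge n-2(k-1)+s$ (forcing $s\le k-1$). Your bounds $a \ge q-b$ and $2b\le n-s-a$ yield $n-s-a \le 2(n-s-q) \le 4(k-1-s)$, so your chain produces $e(G) \le s\Delta + \tfrac12\Delta\cdot 4(k-1-s) = \Delta(2k-2-s)$. That contradicts $e(G) > (k-1)\Delta$ only when $2k-2-s \le k-1$, i.e.\ $s\ge k-1$. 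For any $s<k-1$ there is no contradiction, so the proof does not go through. You are off by essentially a factor of $2$ in the ``extra vertices'' count.

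The missing idea, which the paper uses, is to bound edges \emph{per component} against $c_i-1$ rather than against $c_i$. If the components of $G-S$ have sizes $c_1,\dots,c_p$, then $\sum(c_i-1) = (n-s)-p \le (n-s)-q = 2(k-1-s)$. This is where $\Delta\ge 2k-1$ enters: it implies $c_i \le 1+\sum_j(c_j-1) \le 2k-1 \le \Delta$ for every $i$, so each component is small enough that $\binom{c_i}{2} = \tfrac{c_i(c_i-1)}{2} \le \tfrac{\Delta(c_i-1)}{2}$. Summing gives $e(G-S)\le \tfrac{\Delta}{2}\sum(c_i-1) \le (k-1-s)\Delta$, and adding the $\le s\Delta$ edges meeting $S$ gives $e(G)\le (k-1)\Delta$, the desired contradiction. (The paper phrases this by saying the extremal configuration is $q-1$ singletons plus one clique on $2(k-1-s)+1\le\Delta$ vertices; the convexity/extremal-configuration framing and the $\sum(c_i-1)$ bookkeeping are equivalent.) Your degree-sum bound counts each nontrivial vertex at cost $\Delta/2$; the correct accounting counts each vertex \emph{beyond the first in its component} at cost $\Delta/2$, and that difference is exactly the factor you were missing.
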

\begin{proof}[Proof sketch.]
If $G$ has no $k$-matching, then it contains a set $S$ such that $G-S$ has at least $n-2(k-1)+|S|$ odd components (note that perforce $|S| \le k-1$). The number of edges in this setup is maximized when $G-S$ is the union of $n-2(k-1)+|S|-1$ singletons and a $(2(k-1-|S|)+1)$-clique. Then $G-S$ induces $(k-1-|S|)(2(k-1-|S|)+1)\le (k-1-|S|)\Delta$ edges, and $S$ touches at most $|S|\Delta$ edges, so $G$ has at most $(k-1)\Delta$ edges, contradicting our assumption.
\end{proof}

For an integer vector $\mathbf{n} = (n_1,\dots,n_r) \in \bN^r$, let $K_{\mathbf{n}}$ be the complete $r$-partite graph with parts of size $n_1,\dots,n_r$.

The next lemma will be our main tool for bounding the number of missing crossing edges using the $K_{r+1}$-freeness of our graph. We will generally apply it to the neighborhood of a vertex.
This is a folklore result (see, for example, \cite{BMSW}), 
but we include a short proof for completeness.

\begin{lem}\label{folklore}
Let $r \ge 2$ and let $\mathbf{n} = (n_1,\dots,n_r) \in \bN^r$ be such that $n_1 \le n_2 \le \dots \le n_r$. Then any $K_r$-free subgraph of $K_{\mathbf{n}}$ contains at most $e\left(K_{\mathbf{n}}\right) - n_1n_2$ edges.
\end{lem}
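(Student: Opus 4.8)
The plan is to prove this by induction on $r$. For the base case $r = 2$: a $K_2$-free subgraph of $K_{\mathbf{n}}$ is an empty graph, so it has $0 = e(K_{\mathbf{n}}) - e(K_{\mathbf{n}})$ edges. Wait — that is not quite the claimed bound; when $r=2$ we have $e(K_{\mathbf n}) = n_1 n_2$, so the bound $e(K_{\mathbf n}) - n_1 n_2 = 0$ is exactly right. So the base case holds. For the inductive step, suppose the statement holds for $r-1$ and let $H \subs K_{\mathbf{n}}$ be $K_r$-free with parts $V_1,\dots,V_r$ of sizes $n_1 \le \dots \le n_r$.

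The key idea is to pick a suitable vertex $v$ and delete it: removing $v$ from its part $V_i$ leaves a complete $(r)$-partite graph, but we want to end up comparing against a complete $(r-1)$-partite pattern, so instead I would focus on a vertex whose neighborhood misses an entire part. Concretely, I would argue as follows. If some vertex $v \in V_1$ has a non-neighbor in \emph{every} other part $V_2,\dots,V_r$, then at least $r-1$ edges at $v$ are missing; but that only gives a bound of $r-1$, which is far too weak. So this naive approach fails, and I need to exploit $K_r$-freeness more globally.

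The better approach: consider the two smallest parts $V_1, V_2$ and count, for each pair $(u,w) \in V_1 \times V_2$, whether $uw \in E(H)$. I claim at least $n_1 n_2$ such pairs are non-edges. Suppose not; then more than $e(K_{\mathbf n}) - n_1 n_2$ edges of $H$ are present overall, and in particular by averaging there is a pair $u \in V_1$, $w \in V_2$ with $uw \in E(H)$ such that... actually the cleanest route is: delete a vertex $v$ of minimum degree. Since $H$ is $K_r$-free, $H[\Gamma(v)]$ is $K_{r-1}$-free; $\Gamma(v)$ meets each of the other $r-1$ parts in some set, and $H[\Gamma(v)]$ is a $K_{r-1}$-free subgraph of a complete $(r-1)$-partite graph (possibly with some empty parts, in which case it is automatically $K_{r-1}$-free and we are done). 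Applying the induction hypothesis to the two smallest nonempty parts among these neighborhoods, and a careful bookkeeping of the edges at $v$ versus the edges in $H - v$ (which is a $K_r$-free subgraph of $K_{\mathbf{n}'}$ for the vector $\mathbf n$ with $n_i$ decreased by one), one telescopes the missing-edge count down to $n_1 n_2$. The main obstacle I anticipate is handling the bookkeeping when the minimum-degree vertex does not lie in a smallest part, so that deleting it changes which parts are the two smallest — I would handle this by always deleting a vertex from a \emph{largest} part $V_r$ instead (which preserves the ordering of the two smallest parts and hence the target bound $n_1 n_2$), noting that a minimum-degree vertex exists in $V_r$ with degree at most $e(K_{\mathbf n \setminus r})$-controlled, and running the induction on $V_r$ shrinking one vertex at a time until $|V_r| = n_2$, at which point a direct argument or a cleaner symmetric count finishes it. Alternatively, and perhaps most robustly, I would give a direct (non-inductive) proof: take an optimal $H$, and observe that if fewer than $n_1 n_2$ crossing pairs between $V_1$ and $V_2$ are missing, then one can greedily build a $K_r$ by picking a vertex in each part extending a common neighborhood, using a defect-Hall / counting argument across $V_1, V_2, \dots, V_r$ in increasing order of part size — the ordering $n_1 \le \dots \le n_r$ ensures the counts never run out. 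I expect this direct counting argument to be the shortest and least error-prone, and the place where care is needed is verifying that the greedy extension always succeeds given only the hypothesis that at most $n_1 n_2 - 1$ pairs between the two smallest parts are absent.
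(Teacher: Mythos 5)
Your writeup is a sequence of partial sketches rather than a proof, and the two concrete claims you do commit to are both false or incomplete. First, the claim ``at least $n_1 n_2$ of the pairs $(u,w)\in V_1\times V_2$ are non-edges'' is simply not true: take $r=3$ with all parts of size $n$ and delete every edge between $V_2$ and $V_3$; the result is $K_3$-free with exactly $n_1 n_2 = n^2$ missing edges, \emph{none} of which lie between $V_1$ and $V_2$. The lemma bounds the \emph{total} number of missing edges, and they can be distributed arbitrarily among the $\binom{r}{2}$ pairs of parts, so any argument that zooms in on $V_1,V_2$ alone cannot work. Your final ``greedy/defect-Hall'' suggestion inherits the same flaw, since it again starts from ``fewer than $n_1n_2$ crossing pairs between $V_1$ and $V_2$ are missing,'' which is not the negation of the statement you want to prove. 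Second, the induction on $r$ (or on $\sum n_i$) via deleting a minimum-degree vertex is left as a pointer: you acknowledge the bookkeeping obstacle, gesture at a fix (always delete from $V_r$), but do not carry out the estimate that actually telescopes to $n_1n_2$, and the step where the parts among $\Gamma(v)$ can be empty needs more than a parenthetical remark (an empty part makes the subgraph $K_{r-1}$-free for free, but then you obtain no missing edges from that level of the induction and must account for them elsewhere).

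The idea you are missing is a single clean double-count, which is what the paper does: $K_{\mathbf n}$ contains exactly $\prod_{i=1}^r n_i$ copies of $K_r$, and each single (crossing) edge lies in at most $\prod_{i=3}^r n_i$ of them (this is maximized when the edge runs between the two smallest parts, by the ordering $n_1\le\dots\le n_r$). A $K_r$-free subgraph must destroy every copy, so at least $\prod_{i=1}^r n_i \big/ \prod_{i=3}^r n_i = n_1 n_2$ edges must be deleted. This avoids induction entirely, has no casework, and makes transparent why the answer is $n_1n_2$ and why the hypothesis $n_1\le n_2\le\dots\le n_r$ is used. I'd recommend replacing your sketches with this two-line counting argument.
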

\begin{proof}
There are exactly $\prod_{i=1}^{r} n_i$ copies of $K_r$ in $K_{\mathbf{n}}$. Each edge is contained in at most $\prod_{i=3}^{r} n_i$ of these copies, so a $K_r$-free subgraph must have at least $n_1n_2$ missing edges.
\end{proof}

We will also make use of the following classical result saying that every $K_{r+1}$-free graph with relatively large minimum degree is $r$-partite.

\begin{thm}[Andr\'{a}sfai-Erd\H{o}s-S\'{o}s \cite{aes}]\label{thm:aesos} 
Let $r \ge 2$ and let $G$ be a $K_{r+1}$-free graph $n$ vertices. If the minimum degree $\delta$ of $G$ is strictly greater than $\frac{3r-4}{3r-1}n$, then $G$ is $r$-partite.
\end{thm}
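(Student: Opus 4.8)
I would prove this by induction on $r$. The base case $r=2$ is the classical fact that a triangle-free graph $G$ with $\delta(G)>\frac{2}{5}n$ is bipartite, and I would prove it using a shortest odd cycle: if $G$ were not bipartite, take a shortest odd cycle $C=v_1v_2\cdots v_\ell$, so $\ell\ge5$ by triangle-freeness, and note that minimality forces $C$ to be induced (a chord would split $C$ into a strictly shorter odd cycle together with an even cycle). I would then show that no vertex of $G$ has three neighbours on $C$: if $w\notin V(C)$ has two neighbours $v_i,v_j$ on $C$, then $w$ together with whichever of the two $v_i$--$v_j$ arcs has odd length forms an odd cycle, so minimality of $\ell$ forces the complementary even arc to have length exactly $2$; hence any two neighbours of $w$ on $C$ lie at cyclic distance $2$, and three vertices cannot pairwise lie at cyclic distance $2$ on an odd cycle. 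Counting incidences between $V(C)$ and $V(G)$ now gives
\[
  \ell\,\delta(G)\ \le\ \sum_{i=1}^{\ell} d(v_i)\ =\ \sum_{w\in V(G)}\bigl|\Gamma(w)\cap V(C)\bigr|\ \le\ 2n,
\]
so $\delta(G)\le\frac{2n}{\ell}\le\frac{2}{5}n$, a contradiction.

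For the inductive step ($r\ge3$) I would argue by contradiction, assuming $G$ is not $r$-partite. Replacing $G$ by an edge-maximal $K_{r+1}$-free supergraph on the same vertex set can only increase $\delta(G)$ and $\chi(G)$, so I may assume $G$ is edge-maximal $K_{r+1}$-free. If $G$ were $K_r$-free, then since $(3r-4)^2>(3r-7)(3r-1)$ we have $\frac{3r-4}{3r-1}>\frac{3(r-1)-4}{3(r-1)-1}$, so $G$ satisfies the hypothesis of the theorem for $r-1$ and is therefore $(r-1)$-partite, a contradiction; hence $G$ contains a $K_r$. Now let $v$ be a vertex of minimum degree and set $H=G[\Gamma(v)]$, so $|V(H)|=d(v)=\delta(G)$ and $H$ is $K_r$-free (a $K_r$ in $\Gamma(v)$ together with $v$ would be a $K_{r+1}$). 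For every $u\in\Gamma(v)$ we have $d_H(u)=|\Gamma(u)\cap\Gamma(v)|\ge d(u)+d(v)-n\ge2\delta(G)-n$, and the key point is that the hypothesis $\delta(G)>\frac{3r-4}{3r-1}n$ rearranges to \emph{exactly} $2\delta(G)-n>\frac{3(r-1)-4}{3(r-1)-1}\,\delta(G)$. Thus $H$ satisfies the hypothesis of the theorem for $r-1$, so by induction it is $(r-1)$-partite; fix a partition $\Gamma(v)=A_1\sqcup\cdots\sqcup A_{r-1}$ into independent sets.

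It remains to extend this to an $r$-partition of $G$. Colour $A_1,\dots,A_{r-1}$ with colours $1,\dots,r-1$, colour $v$ with colour $r$, and consider the set $\overline N(v)=V(G)\setminus(\{v\}\cup\Gamma(v))$ of non-neighbours of $v$, which by the degree bound has size $n-1-\delta(G)<\frac{3}{3r-1}n$. Because $G$ is edge-maximal $K_{r+1}$-free, every non-adjacent pair of vertices has a common $(r-1)$-clique among their neighbours; applied to $v$ and a vertex $x\in\overline N(v)$ this yields an $(r-1)$-clique inside $\Gamma(v)$, which must be a transversal of $A_1,\dots,A_{r-1}$, so $x$ has a neighbour in every $A_i$ and is forced to receive colour $r$. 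To conclude, one needs $\{v\}\cup\overline N(v)$ to be independent: if $xy$ were an edge inside $\overline N(v)$, then $x$ has a neighbour in each $A_i$ and many neighbours in $\Gamma(v)$ (since $\overline N(v)$ is small), and one aims to produce an $(r-1)$-clique inside $\Gamma(x)\cap\Gamma(y)\cap\Gamma(v)$ — or to reuse the transversal clique witnessed for $x$ — which together with $\{x,y\}$ would be a $K_{r+1}$, a contradiction. This gives the desired $r$-partition of $G$.

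The main obstacle is precisely this final absorption step: pinning down how the linear-sized set $\overline N(v)$ attaches to the $(r-1)$-partition of $\Gamma(v)$, and in particular ruling out internal edges of $\overline N(v)$. The naive double-counting that produces the required clique only works comfortably for larger $r$, so I anticipate a careful, somewhat case-heavy analysis, with the smallest cases (notably $r=3$, i.e.\ $K_4$-free graphs with $\delta(G)>\frac{5}{8}n$) likely needing a dedicated argument. Moreover, since the bound is tight — attained by a $t$-fold blow-up of $C_5$ joined to a complete $(r-2)$-partite graph whose parts all have size $3t$ — there is no slack to exploit, which is exactly what makes this step the crux.
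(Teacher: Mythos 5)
This statement is the Andr\'asfai--Erd\H{o}s--S\'os theorem, which the paper quotes from \cite{aes} and uses as a black box; there is no proof in the paper to compare against, so your proposal has to stand on its own. Its first parts do: the base case $r=2$ via a shortest odd cycle is the standard argument and is correct, and the inductive set-up is fine --- in particular your observation that $2\delta-n>\frac{3(r-1)-4}{3(r-1)-1}\delta$ rearranges exactly to $\delta>\frac{3r-4}{3r-1}n$ correctly shows that the neighbourhood of a minimum-degree vertex is $(r-1)$-partite by induction, and edge-maximality does give each $x\notin\Gamma(v)\cup\{v\}$ a transversal $(r-1)$-clique in $\Gamma(x)\cap\Gamma(v)$.

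The genuine gap is the step you yourself defer: proving that $\overline N(v)=V(G)\setminus(\{v\}\cup\Gamma(v))$ is independent. Given the $(r-1)$-partition of $\Gamma(v)$, that independence \emph{is} the statement that $G$ is $r$-partite, so what is missing is not a final verification but essentially the whole theorem. Moreover, the route you sketch for it cannot be completed for any $r$, not just small ones. Write $\beta=n-\delta$; the hypothesis gives only $\beta<\frac{3}{3r-1}n$. To exclude an edge $xy$ inside $\overline N(v)$ you want a $K_{r-1}$ in $W=\Gamma(x)\cap\Gamma(y)\cap\Gamma(v)$; any greedy or minimum-degree count starts from $|W|\ge n-3\beta$ and loses at most $\beta$ per clique vertex added, so it needs $n-(r+1)\beta>0$, i.e.\ $\beta<\frac{n}{r+1}$. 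Since $\frac{3}{3r-1}>\frac{1}{r+1}$ for every $r\ge2$, the naive count falls short by a margin that does not shrink as $r$ grows --- contrary to your expectation that it ``works comfortably for larger $r$'' --- and the same computation defeats the variant of forcing the clique via minimum degree inside $W$ (which needs $|W|>(r-2)\beta$). As you note, the bound is attained by a balanced blow-up of $C_5$ joined to a complete $(r-2)$-partite graph, so no argument with this much slack can exist; the known proofs treat the non-neighbours by genuinely structural means (for instance exhibiting an odd cycle of length at least five joined to a $K_{r-2}$ inside a non-$r$-partite $K_{r+1}$-free graph and double counting degrees against that configuration), which is exactly the content your sketch omits. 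As written, the proposal is therefore not a proof.
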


A \emph{blowup} $H=G[n_1,\dots, n_k]$ of a $k$-vertex graph $G$ is defined on vertex set $\bigcup_{i \in [k]} W_i$ with $|W_i|=n_i$, where the $W_i$ are disjoint, and $w\in W_i$ and $w'\in W_j$ are adjacent in $H$ if and only if $v_i$ and $v_j$ are adjacent in $G$. Note that every \ptt graph is a blowup $L_r[x,y,y,n_1,\dots,n_r]$, where $L_r$ is the graph whose first five vertices induce the pentagon $v_1v_2v_5v_4v_3$, and all other edges are present. Indeed, let us call such a blowup a \emph{complete pentagon-$r$-partite} (or \emph{\ptg{r}}) \emph{graph} if $x\le y\le n_i$ for every $i\in [r]$. A \ptt graph is then a \ptg{r} graph such that the numbers $x+y+n_1,x+y+n_2,n_3,\dots,n_r$ do not differ by more than 1 (i.e., each of them is equal to $\floor{\frac{n+x}{r}}$ or $\ceil{\frac{n+x}{r}}$).

The following statement tells us how to make blowups $r$-partite. We sketch the proof for completeness.
\begin{thm}[Erd\H{o}s-Gy\H{o}ri-Simonovits \cite{erdoscan'tmaths}]
 Let $H=G[n_1,\dots,n_k]$. Then one can delete $D_r(H)$ edges from $H$ to obtain $G'[n_1,\dots,n_k]$ for some $r$-partite subgraph $G'$ of $G$.
\end{thm}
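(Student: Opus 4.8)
The plan is to reduce the statement to the corresponding fact about the base graph $G$, by showing that among all $r$-partitions of $V(H)$ attaining the minimum $D_r(H)$ of internal edges there is one that is \emph{constant on every blow-up class} $W_i$. Granting such a partition, it descends to an $r$-partition $c'$ of $V(G)$ (colour $v_i$ by the common colour of $W_i$). Let $G'$ be $G$ with all $c'$-internal edges deleted; then $G'$ is an $r$-partite subgraph of $G$, and the blowup $G'[n_1,\dots,n_k]$ is exactly $H$ with, for each $c'$-internal edge $v_iv_j$ of $G$, the complete bipartite ``bundle'' between $W_i$ and $W_j$ removed. Since $\sum_{v_iv_j\in E(G):\,c'(v_i)=c'(v_j)} n_in_j$ equals the number of internal edges of $H$ under the lifted partition, which is $D_r(H)$, we obtain $G'[n_1,\dots,n_k]$ from $H$ by deleting precisely $D_r(H)$ edges, as required.

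To produce the desired partition I would start from an arbitrary optimal $r$-colouring $c$ of $V(H)$ and perform \emph{class-by-class recolouring}. Write $a_{i\beta}$ for the number of vertices of $W_i$ coloured $\beta$. Because $G$ is simple, each $W_i$ is independent in $H$, so the number of internal edges equals $\sum_{v_iv_j\in E(G)}\sum_{\beta\in[r]} a_{i\beta}a_{j\beta}$. Fixing the colours of all classes other than $W_i$, the part of this sum involving $W_i$ is $\sum_\beta a_{i\beta}\,b_\beta$, where $b_\beta := \sum_{j:\,v_iv_j\in E(G)} a_{j\beta}$ does not depend on how $W_i$ is coloured. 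This is a linear function of $(a_{i1},\dots,a_{ir})$ on the simplex $\sum_\beta a_{i\beta}=n_i$, so it is minimised by putting all of $W_i$ in one colour $\beta$ with $b_\beta$ minimum; recolouring $W_i$ monochromatically this way does not increase the number of internal edges. Carrying this out for $i=1,\dots,k$ in turn yields an $r$-colouring constant on each $W_i$ with at most $D_r(H)$ internal edges, hence exactly $D_r(H)$.

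The remaining work is just bookkeeping: $G'\subseteq G$ is $r$-partite by construction, $G'[n_1,\dots,n_k]\subseteq H$, and the edge count of $H\setminus G'[n_1,\dots,n_k]$ is the sum above, equal to $D_r(H)$. I expect the only delicate point to be the sequential recolouring in the second paragraph: one must observe that each single-class recolouring is carried out against the \emph{current} colouring of the other classes and is non-increasing in the number of internal edges, so after $k$ steps the total has not increased, and there is no circularity because no class is revisited. This monotone greedy argument is essentially the crux of the proof, and it is short.
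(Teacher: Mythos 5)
Your proof is correct and is essentially the same symmetrization argument as the paper's: the paper symmetrizes the optimal $r$-partite \emph{subgraph} class by class (copying a maximum-degree vertex's neighborhood across $W_i$), while you symmetrize the optimal $r$-\emph{colouring} class by class (making $W_i$ monochromatic in the best colour). These are dual formulations of the same greedy step, and your version has the small pedagogical advantage of making explicit why each step is non-increasing (linear objective on a simplex) and why the result is $r$-partite.
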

\begin{proof}[Proof sketch.]
Take an $r$-partite subgraph of $H$ obtained by deleting $D_r(H)$ edges from $H$, and ``symmetrize'' it, i.e., for $i = 1,\dots,k$, carry out the following: Pick some $v \in W_i$ with $d(v)$ largest. Then for each $w\in W_i \setminus v$, change the edges touching $w$ so that its neighborhood $\Gamma(w)$ becomes the same as $\Gamma(v)$.

Through this process, the graph remains an $r$-partite subgraph of $H$, and the number of edges in it does not decrease (thus stays equal to $e(H)-D_r(H)$). At the end, we have  $\Gamma(v) = \Gamma(w)$ whenever $v$ and $w$ belong to the same blowup part $W_i$, so the resulting graph is the blowup of some $G'\subs G$ itself. 
\end{proof}
Deleting any edge of $L_r$ makes it $r$-partite, so we get the following.
\begin{cor} \label{lem:ptgcount}
If $G= L_r[x,y,y,n_1,\dots,n_r]$ is a \ptg{r} graph with $x\le y\le n_i$ for every $i\in[r]$, then $D_r(G)=xy$.
\end{cor}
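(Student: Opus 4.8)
The plan is to prove the two inequalities $D_r(G)\le xy$ and $D_r(G)\ge xy$ separately, in both cases working through the blowup structure of $G=L_r[x,y,y,n_1,\dots,n_r]$. For the upper bound I would exhibit an explicit $r$-partition of $G$ with exactly $xy$ internal edges. The first five vertices of $L_r$ induce the pentagon $v_1v_2v_5v_4v_3$, so deleting the single edge $v_1v_2$ turns that $C_5$ into a path, and $L_r-v_1v_2$ is $r$-partite with independent classes $\{v_1,v_2,v_4\}$, $\{v_3,v_5\}$, and one singleton for each of the remaining $r-2$ (universal) vertices of $L_r$; this is just the observation noted above that deleting any edge of $L_r$ makes it $r$-partite, applied to a pentagon edge. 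Blowing this partition up gives an $r$-partition of $G$ in which an edge is internal precisely when it joins two blowup parts whose indices lie in the same class and form an edge of $L_r$. The only such pair of indices is $\{v_1,v_2\}$, so the internal edges are exactly the $|W_1|\,|W_2|=xy$ edges between the two corresponding parts, and hence $D_r(G)\le xy$.

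For the lower bound I would invoke the Erd\H{o}s-Gy\H{o}ri-Simonovits blowup theorem stated above, applied to $H=G=L_r[x,y,y,n_1,\dots,n_r]$: it guarantees that one can delete $D_r(G)$ edges from $G$ to reach a blowup $G'[x,y,y,n_1,\dots,n_r]$ for some $r$-partite subgraph $G'\subs L_r$. Now $L_r$ is not itself $r$-partite --- it contains a $C_5$ each of whose vertices is adjacent to the same $r-2$ mutually adjacent vertices, so $\chi(L_r)\ge 3+(r-2)=r+1$ --- hence $G'$ must omit at least one edge $v_av_b$ of $L_r$. Since $G'[\cdot]$ is itself a blowup, omitting $v_av_b$ means that \emph{all} $|W_a|\,|W_b|$ edges between the two corresponding parts have been deleted, so $D_r(G)\ge|W_a|\,|W_b|$. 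It only remains to check that $|W_a|\,|W_b|\ge xy$ for every edge $v_av_b$ of $L_r$: by the hypothesis $x\le y\le n_i$, the part $W_1$ (the blowup of $v_1$) is the unique part of smallest size $x$ and every other part has size at least $y$, so an edge incident to $v_1$ contributes a product of at least $x\cdot y$, while an edge avoiding $v_1$ contributes at least $y^2\ge xy$. Combining the two bounds yields $D_r(G)=xy$.

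I do not expect a substantive obstacle here: the corollary is essentially a bookkeeping consequence of the two results already quoted, namely the Erd\H{o}s-Gy\H{o}ri-Simonovits blowup theorem and the fact that deleting any single edge of $L_r$ makes it $r$-partite. The only points needing a little care are checking that $L_r$ is genuinely not $r$-partite (so that the extracted subgraph $G'$ is forced to lose an edge) and the short case analysis showing that the minimum blowup product over all edges of $L_r$ is attained by a pentagon edge incident to the unique smallest part --- which is exactly where the hypotheses $x\le y$ and $y\le n_i$ are used.
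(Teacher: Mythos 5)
Your proof is correct and is precisely the argument the paper has in mind but declines to spell out: the Erd\H{o}s--Gy\H{o}ri--Simonovits blowup theorem reduces $D_r(G)$ to the minimum, over $r$-partite subgraphs $G'\subseteq L_r$, of the number of blown-up edges deleted; since $\chi(L_r)=r+1$ the minimum is realized by removing at least one edge, and since deleting any single edge of $L_r$ already makes it $r$-partite, $D_r(G)$ equals the smallest blowup product $|W_a||W_b|$ over edges $v_av_b$ of $L_r$, which under $x\le y\le n_i$ is $xy$. One tiny cosmetic remark: when $x=y$ the part $W_1$ is not the \emph{unique} part of smallest size, but the inequality $|W_a||W_b|\ge xy$ you derive does not actually rely on uniqueness, so the argument is unaffected.
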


This means that an optimal $r$-partition of a \ptg{r} graph (minimizing the number of internal edges) can be obtained by putting $Y_1\cup Z_1$ in the first part, $X\cup Y_2\cup Z_2$ in the second, and $Z_i$ in the $i$th part for every $i\ge 3$. Let us call this the \emph{standard $r$-partition} of such a graph.

As a benchmark, it will be helpful to understand roughly how many internal edges there are in the conjectured extremal graphs, so that we can cut short some edge cases in our analysis. 

\begin{lem} \label{lem:sampleptg}
For any integers $r\ge 2$, $n$ and $0\le s\le \frac{n}{r^4}$, there is a \ptg{r} graph $G$ with $n$ vertices and at least $t_r(n)-\frac{sn}{r}(1+1/r^3)$ edges such that $D_r(G) \ge \frac{\sqrt{s^3n}}{r^2}$.
\end{lem}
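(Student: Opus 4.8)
The plan is to obtain $G$ as a small local perturbation of the Tur\'an graph $T_r(n)$. When $s=0$ one can take $G=T_r(n)=L_r[0,0,0,n_1,\dots,n_r]$, which is a \ptg{r} graph with $t_r(n)$ edges and $D_r(G)=0$, so assume from now on that $s\ge 1$; then $n\ge r^4$. Fix a part $U_1$ of $T_r(n)$ with $|U_1|=\floor{n/r}$ (one exists) and any other part $U_2$, put $x=s$ and $y=\ceil{\sqrt{sn}/r^2}$, let $Y_1\subseteq U_1$ with $|Y_1|=y$ and $Z_1=U_1\setminus Y_1$, let $X,Y_2\subseteq U_2$ be disjoint with $|X|=x$, $|Y_2|=y$ and $Z_2=U_2\setminus(X\cup Y_2)$, and set $Z_j=U_j$ for $j\ge 3$. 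Now let $G$ be the graph on the same vertex set obtained from $T_r(n)$ by deleting the $y^2$ edges between $Y_1$ and $Y_2$, deleting the $x|Z_1|$ edges between $X$ and $Z_1$, and adding the $xy$ (previously absent) edges between $X$ and $Y_2$.

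First I would verify that $G$ is a \ptg{r} graph. Working out the neighbourhood in $G$ of each of $X,Y_1,Y_2,Z_1,Z_2,Z_3,\dots,Z_r$ shows that it is constant on each set and that the sets interact exactly as the corresponding vertices of $L_r$ do, with the $5$-cycle running through $X,Y_1,Z_2,Z_1,Y_2$ (in this cyclic order) and with $Z_3,\dots,Z_r$ joined to everything; hence $G=L_r[x,y,y,n_1,\dots,n_r]$ with $n_1=\floor{n/r}-y$, $n_2=|U_2|-x-y$ and $n_j=|U_j|\ge\floor{n/r}$ for $j\ge 3$. The hypothesis $s\le n/r^4$ rearranges to $sr^2\le\sqrt{sn}$, so $x=s\le\sqrt{sn}/r^2\le y$; and since $y\le\sqrt{sn}/r^2+1\le n/r^4+1$, one checks (using $n\ge r^4$) that $x+2y\le\floor{n/r}$, which gives $y\le n_i$ for all $i$. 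So $G$ is \ptg{r}, and by \Cref{lem:ptgcount}, $D_r(G)=xy=s\,\ceil{\sqrt{sn}/r^2}\ge s\cdot\sqrt{sn}/r^2=\sqrt{s^3n}/r^2$.

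It remains to count edges. By construction $e(G)=t_r(n)-y^2-xn_1+xy=t_r(n)-y^2-x\floor{n/r}+2xy$, so from $\floor{n/r}\le n/r$ and $x=s$ we get $e(G)\ge t_r(n)-\frac{sn}{r}-(y^2-2sy)$. Since $s\ge 1$ and $y-1\le\sqrt{sn}/r^2$,
\[
y^2-2sy\ \le\ y^2-2y\ =\ (y-1)^2-1\ \le\ \frac{sn}{r^4}-1\ <\ \frac{sn}{r^4},
\]
whence $e(G)\ge t_r(n)-\frac{sn}{r}-\frac{sn}{r^4}=t_r(n)-\frac{sn}{r}\bigl(1+1/r^3\bigr)$, as needed. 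The one genuinely delicate point should be the choice of $x$ and $y$: the inequality in the statement is essentially tight for this construction, so there is no room to be wasteful. Taking $x=s$ \emph{exactly} is what makes the dominant loss term $x\floor{n/r}$ fit inside the main $\frac{sn}{r}$ part of the budget, and the rounding in $y=\ceil{\sqrt{sn}/r^2}$ is absorbed by the remaining $\frac{sn}{r^4}$ precisely because $s\ge 1$ allows $2sy\ge 2y$; essentially any larger choice of $x$ or $y$ would break the bound.
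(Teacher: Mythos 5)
Your proof is correct and follows essentially the same approach as the paper: both take $x=s$, $y=\ceil{\sqrt{sn}/r^2}$ and choose the remaining part sizes so that the vertex classes line up with a Tur\'an equipartition. The only difference is cosmetic---the paper splits $X$ as evenly as possible between the first two parts (hence the $\ceil{s/2},\floor{s/2}$ bookkeeping in its edge count) whereas you keep all of $X$ in one class and realize $G$ as an explicit edge-perturbation of $T_r(n)$---and both arrangements deliver the same bounds.
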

\begin{proof}
If $s=0$, then $G=T_r(n)$ satisfies the conditions, so we may assume that $s\ge 1$.

Let $t= \ceil{\frac{\sqrt{sn}}{r^2}}$. As $\sqrt{s} \le \frac{\sqrt{n}}{r^2}$, we have $s \le \frac{\sqrt{sn}}{r^2} \le t \le \frac{2\sqrt{sn}}{r^2} \le \frac{2n}{r^4}$. We claim that the graph $G=L_r[s,t,t,n_1,\dots,n_r]$ works if each of the numbers $n_1+t+\ceil{s/2}, n_2+t+\floor{s/2}, n_3, n_4,\dots, n_r$ is equal to $\ceil{n/r}$ or $\floor{n/r}$, in a non-increasing order. This graph is well-defined because, using $s\le t \le \frac{2n}{r^4}$ and $2 \le r$,
\[ t+\ceil{s/2} \le t+s \le 2t\le \frac{4n}{r^4} \le \frac{n}{2r}.\]
Moreover, since $\lfloor 2x \rfloor \ge 2 \lfloor x\rfloor$ for any $x>0$, this shows that $s\le t\le n_i$ for every $i\in [r]$, so by \Cref{lem:ptgcount}, $D_r(G) = st \ge \frac{\sqrt{s^3n}}{r^2}$.

To count the edges in $G$, let us split $X$ into two sets $X_1$ and $X_2$ of size $\ceil{s/2}$ and $\floor{s/2}$, respectively, and note that $(X_1\cup Y_1 \cup Z_1, X_2\cup Y_2\cup Z_2, Z_3, Z_4, \dots, Z_r)$ is an $r$-equipartition of the vertex set with exactly $st$ internal edges. There are $t_r(n)$ potential crossing edges, but $|X_1|(|X_2|+|Z_2|) + |X_2|(|X_1|+|Z_1|) - |X_1||X_2| + |Y_1||Y_2|$ of them are missing. 

Here $|Y_1||Y_2| = t^2 = \ceil{\frac{\sqrt{sn}}{r^2}}^2 \le \frac{sn}{r^4} + 2t - 1$ because $(\ceil{x}-1)^2 \le x^2$, and therefore $\ceil{x}^2 \le x^2 + 2\ceil{x} - 1$ for every $x\ge 1$. 
Also, $|X_1||X_2| = \floor {s/2}\ceil {s/2} = \floor {s^2/4}$. Finally, $|X_1|+|Z_1|$ and $|X_2|+|Z_2|$ are both at most $\ceil{n/r}-t \le \frac{n}{r}+1-t$, so we get $|X_1|(|X_2|+|Z_2|) + |X_2|(|X_1|+|Z_1|) \le s(\frac{n}{r}+1-t)$.

In total, this gives at least 
\[ st + t_r(n) - s \left( \frac{n}{r}+1-t \right) + \floor {\frac{s^2}{4}} - \left( \frac{sn}{r^4} + 2t - 1 \right) =
t_r(n) - \frac{sn}{r} - \frac{sn}{r^4} + 2st-2t + \floor {\frac{s^2}{4}} -s +1 \]
edges in $G$. We can see that this is at least $t_r(n)-\frac{sn}{r}(1+1/r^3)$ using the fact that $2st\ge 2t$ and $\floor{s^2/4} + 1 \ge s$ hold for every integer $s\ge 1$.
\end{proof}

\section{Very dense graphs} \label{sec:dense}

\Cref{thm:rpartstab} says that every $K_{r+1}$-free graph $G$ with very close to $t_r(n)$ edges is $r$-partite. The next lemma shows that $G$ is at most one vertex away from being $r$-partite, even if we allow slightly fewer edges.

\begin{lem} \label{lem:almostpartite}
Let $r\ge 2$, and suppose $G$ is a $K_{r+1}$-free graph on $n\ge 9r^4$ vertices with at least $t_r(n) - \frac{n}{r}(1+1/r^3)$ edges. Then there is a vertex $v\in V(G)$ such that $G-v$ is $r$-partite.
\end{lem}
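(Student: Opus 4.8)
The plan is to run a dichotomy on the minimum degree of $G$. Call a vertex $v$ \emph{light} if $d_G(v) \le \frac{r-1}{r}n - \frac{n}{r^4} - 3$. If $G$ has a light vertex, I will delete it and apply Brouwer's theorem (\Cref{thm:rpartstab}) to the $(n-1)$-vertex remainder; if $G$ has no light vertex, then its minimum degree is large enough that \Cref{thm:aesos} forces $G$ itself to be $r$-partite. In either case some $G-v$ is $r$-partite, which is what we want.

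In the light-vertex case, $G-v$ is $K_{r+1}$-free on $n-1\ge 2r+1$ vertices, with
\[ e(G-v) = e(G)-d_G(v) \ge t_r(n) - \frac{n}{r}(1+1/r^3) - d_G(v). \]
By \Cref{lem:turan}(1), $t_r(n-1) = t_r(n) - \floor{\frac{r-1}{r}n}$; and writing $n=qr+s$ with $0\le s<r$ one checks in one line (cases $s=0$ and $s\ge1$) that $\floor{\frac{r-1}{r}n}+\floor{\frac{n-1}{r}} = n-1$. Hence the threshold in Brouwer's theorem applied to $G-v$ is $t_r(n-1)-\floor{\frac{n-1}{r}}+2 = t_r(n)-(n-1)+2$, and the definition of ``light'' is designed exactly so that $e(G-v)$ meets this threshold. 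Then \Cref{thm:rpartstab} gives that $G-v$ is $r$-partite.

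If $G$ has no light vertex, then $\delta(G) > \frac{r-1}{r}n - \frac{n}{r^4} - 3$, and I would show this exceeds the Andr\'asfai--Erd\H{o}s--S\'os threshold $\frac{3r-4}{3r-1}n$. Since $\frac{r-1}{r}-\frac{3r-4}{3r-1} = \frac{1}{r(3r-1)}$, it is enough to verify $\frac{n}{r(3r-1)} - \frac{n}{r^4} \ge 3$; the left side equals $n\cdot\frac{r^3-3r+1}{r^4(3r-1)}$, which by $n\ge 9r^4$ is at least $\frac{9(r^3-3r+1)}{3r-1}$, and this is $\frac{27}{5} > 3$ at $r=2$ and increasing in $r$. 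So $\delta(G) > \frac{3r-4}{3r-1}n$, and \Cref{thm:aesos} makes $G$ itself $r$-partite, whence $G-v$ is $r$-partite for every $v$.

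The one step needing genuine care is the numerical comparison in the last paragraph: one must confirm that the $\frac{n}{r^4}$ slack in the edge hypothesis, together with the additive constant coming from Brouwer's bound, is comfortably smaller than the Andr\'asfai--Erd\H{o}s--S\'os gap $\frac{n}{r(3r-1)}$ (here one uses $r(3r-1)<r^4$ for $r\ge2$, so the gap does dominate). This is exactly where the hypothesis $n\ge 9r^4$ enters, and it accounts for the shape of that bound; everything else is routine bookkeeping with the floor functions supplied by \Cref{lem:turan}.
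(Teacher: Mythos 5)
Your proof is correct and follows essentially the same route as the paper's: both run a dichotomy on vertex degrees, applying Andr\'asfai--Erd\H{o}s--S\'os in the high-minimum-degree case and Brouwer (\Cref{thm:rpartstab}) to $G-v$ in the low-degree case, with the hypothesis $n\ge 9r^4$ providing the slack to make the thresholds compatible. The only cosmetic difference is that you set the cut-off for ``light'' at the exact degree needed for Brouwer (yielding the clean identity $\lfloor\frac{r-1}{r}n\rfloor+\lfloor\frac{n-1}{r}\rfloor=n-1$), while the paper uses the AES threshold $\frac{3r-4}{3r-1}n$ as the cut-off and then estimates $e(G-v)$ with a bit of floor-function slack; both calculations amount to the same inequality.
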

\begin{proof}
If the minimum degree of $G$ is greater than $\frac{3r-4}{3r-1}n$, then by \Cref{thm:aesos}, $G$ itself is $r$-partite. Otherwise, there is a vertex $v$ of degree at most $\frac{3r-4}{3r-1}n$, and hence $G-v$ has
\begin{align*}
 e(G-v) &\ge t_r(n) - \frac{n}{r}(1+1/r^3) - \frac{3r-4}{3r-1}n \\
 &\ge t_r(n-1) + \frac{r-1}{r}n - \frac{r-1}{r} - \frac{n-1}{r} - \frac{1}{r} - \frac{n}{r^4} - \frac{3r-4}{3r-1}n \\
 &= t_r(n-1) - \frac{n-1}{r} + \frac{1}{r(3r-1)}n - \frac{n}{r^4} - 1 \\
 &\ge t_r(n-1) - \frac{n-1}{r} + \frac{n}{3r^4} -1 \\
 &\ge t_r(n-1) - \frac{n-1}{r}  + 2 
\end{align*}
edges, where we used $t_r(n) \ge t_r(n-1) + \frac{r-1}{r}(n-1)$ from \Cref{lem:turan} in the second line, $3r^2 \le 3r^4/4$ in the fourth, and $n\ge 9r^4$ in the fifth. But then $G-v$ is $r$-partite by \Cref{thm:rpartstab}.
\end{proof}

This structural lemma allows us to establish our main result when the number of edges is very close to extremal.

\begin{thm} \label{thm:verydense}
Let $r\ge 2$ and $n\ge 2^8r^4$, and suppose $G$ is a $K_{r+1}$-free graph with $n$ vertices and at least $t_r(n)-\frac{n}{r}(1+1/r^3)$ edges. Then there is a \ptg{r} graph $\tG$ such that $D_r(\tG) \ge D_r(G)$ and $e(\tG) \ge e(G)$. 
\end{thm}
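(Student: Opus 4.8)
The plan is to leverage \Cref{lem:almostpartite}: since $e(G)\ge t_r(n)-\frac{n}{r}(1+1/r^3)$ and $n\ge 2^8r^4\ge 9r^4$, there is a vertex $v$ such that $G-v$ is $r$-partite. Fix an $r$-partition $(A_1,\dots,A_r)$ of $G-v$, and write $x_i=|\Gamma_G(v)\cap A_i|$ for the number of neighbours of $v$ in each part; let $N=\Gamma_G(v)$, so $|N|=\sum x_i$. Because $G$ is $K_{r+1}$-free and $G-v$ is $r$-partite, the graph induced on $N$ must be $K_r$-free, so by \Cref{folklore} applied to the complete $r$-partite graph on the parts $N\cap A_1,\dots,N\cap A_r$ (reordered so the two smallest are $x_{(1)}\le x_{(2)}$), the number of edges of $G$ inside $N$ that are \emph{missing} from the complete $r$-partite pattern is at least $x_{(1)}x_{(2)}$. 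Also $D_r(G)$ is at most the number of internal edges of the partition $(A_1\cup\{v\}, A_2,\dots,A_r)$ obtained by putting $v$ into any one part, so $D_r(G)\le \min_i x_i = x_{(1)}$; and $e(G)\le t_r(n) - (\text{missing crossing edges}) \le t_r(n) - (\text{deficiency from equipartition}) - x_{(1)}x_{(2)}$, roughly.

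The key step is to recognise that this data — a vertex $v$ joined to $x_i$ vertices in each of $r$ parts, with a $K_r$-free (hence "close to complete $(r-1)$-partite") graph on $N$ — is exactly the local picture of a \ptg{r} graph around a vertex of $X$. Concretely, I would argue that after possibly moving $v$ and cleaning up the neighbourhood, $G$ is contained in (a spanning subgraph of) a \ptg{r} graph: take the two parts of $G-v$ meeting $N$ in the fewest vertices, say $A_1,A_2$ with $|N\cap A_1|=x_{(1)}\le x_{(2)}=|N\cap A_2|$, and set $X=\{v\}$-blowup... more precisely build $\tG = L_r[x,y,y,n_1,\dots,n_r]$ where $x=x_{(1)}$ plays the role of $|X|$ (a set of "$v$-like" vertices), $y=x_{(2)}$, and the $n_i$ absorb the rest of $A_1,A_2$ and the parts $A_3,\dots,A_r$, chosen so the totals are as equipartitioned as allowed. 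By \Cref{lem:ptgcount}, $D_r(\tG)=xy=x_{(1)}x_{(2)}\ge x_{(1)}\ge D_r(G)$ (using $x_{(2)}\ge 1$, i.e. that $v$ has a neighbour in a second part — which holds unless $D_r(G)=0$, a trivial case where $\tG=T_r(n)$ suffices). For the edge count, $\tG$ is the complete graph on this pentagonal blowup, so $e(\tG)=t_r(n+x) - x^2 - \binom{x}{?}\dots$; comparing with the upper bound $e(G)\le t_r(n) - x_{(1)}x_{(2)} + (\text{slack})$ and using \Cref{lem:turan}(2) to relate $t_r(n+x)$ and $t_r(n)$, one checks $e(\tG)\ge e(G)$. \Cref{lem:sampleptg} with a suitable small $s$ serves as a sanity benchmark for these inequalities and lets us dispose of the regime where $x$ is moderately large.

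I would organise the proof as: (i) invoke \Cref{lem:almostpartite} to get $v$ with $G-v$ $r$-partite, and handle $D_r(G)=0$ separately; (ii) define $x_i$, reorder, and record $D_r(G)\le x_{(1)}$ and the missing-edge bound $\ge x_{(1)}x_{(2)}$ from \Cref{folklore}; (iii) since $e(G)$ is within $\frac{n}{r}(1+1/r^3)$ of $t_r(n)$, deduce that $x_{(1)}x_{(2)}$ and the equipartition deficiency of $(A_1,\dots,A_r)$ are both small — in particular $x_{(1)}\le O(n/r^2)$ so the parameters of the would-be \ptg{r} graph are in the admissible range $x\le y\le n_i$; (iv) construct $\tG$ as a \ptg{r} blowup with $x=x_{(1)}$, $y=x_{(2)}$ and the remaining sizes chosen greedily to maximise edges subject to being a valid \ptg{r} graph on $n$ vertices, and verify $D_r(\tG)\ge D_r(G)$ via \Cref{lem:ptgcount} and $e(\tG)\ge e(G)$ via \Cref{lem:turan}. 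The main obstacle I anticipate is step (iv): showing the edge count does not drop. The subtlety is that a \ptg{r} graph "wastes" edges inside $X$ and between $Y_1,Y_2$, so one must check that these losses are no bigger than the missing-crossing-edge deficit $G$ already pays for its non-$r$-partiteness, together with carefully accounting for the non-equipartition slack in $G$'s partition; getting the constant $\frac{n}{r}(1+1/r^3)$ to be exactly enough — rather than off by a lower-order term — is where the real work lies, and is presumably why this dense case is separated out.
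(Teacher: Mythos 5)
Your steps (i)--(iii) are exactly the paper's opening moves (apply \Cref{lem:almostpartite} to get $v$ with $G-v$ $r$-partite, apply \Cref{folklore} to $\Gamma(v)$, record $D_r(G)\le x_{(1)}$), but the construction you fix in step (iv) is wrong, and no choice of the remaining class sizes can repair it: taking $|X|=x_{(1)}$ and $|Y_1|=|Y_2|=x_{(2)}$ does not satisfy $e(\tG)\ge e(G)$ in general. Concretely, let $r=2$, $n\ge 4096$, and let $G$ be the complete bipartite graph with parts $U_1,U_2$ of size $\frac{n-1}{2}$, minus all edges between a set $A\subseteq U_1$ with $|A|=2$ and a set $B\subseteq U_2$ with $|B|=\lceil\sqrt n\rceil$, plus a new vertex $v$ joined exactly to $A\cup B$. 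Then $G$ is triangle-free, non-bipartite, and $e(G)\ge t_2(n)-\frac n2-\sqrt n+2$, well within the hypothesis of \Cref{thm:verydense}; the (essentially unique) bipartition of $G-v$ forces $x_{(1)}=2$, $x_{(2)}=\lceil\sqrt n\rceil$. Any \ptg{2} graph $L_2[2,y,y,m_1,m_2]$ on $n$ vertices with $y=\lceil\sqrt n\rceil$ has at most $4y+y(m_1+m_2)+m_1m_2\le \frac{n^2}{4}-2n+O(\sqrt n)$ edges (each vertex of $X$ has degree only $2y\approx 2\sqrt n$, so loses about $n/2$ against the Tur\'an count, and $Y_1$--$Y_2$ loses another $y^2\approx n$), which falls short of $e(G)\approx \frac{n^2}{4}-\frac n2$ by about $\frac{3n}{2}$. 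The point you flag as "where the real work lies" is indeed fatal here: the total deficit budget is only about $\frac nr$, essentially paid for by the single low-degree vertex $v$, so the class $X$ cannot be blown up to size $x_{(1)}\ge 2$ -- its members would each waste $\approx n/r$ edges against $Z_1\cup Z_2$.

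The paper's choice is instead $\tG=L_r[1,a_1,a_1,\,n_1-a_1,\,n_2-a_1,\,n_3,\dots,n_r]$, where $a_1=x_{(1)}$ and $n_i=|U_i|$ (no rebalancing is attempted, since only a \ptg{r} graph, not a pentagonal Tur\'an graph, is required at this stage). Then $D_r(\tG)=a_1\ge D_r(G)$ already suffices -- you do not need $D_r(\tG)=x_{(1)}x_{(2)}$ -- and the edge comparison becomes elementary: by \Cref{folklore}, $e(G)\le \sum_{i<j}n_in_j-a_1a_2+\sum_i a_i$, while $e(\tG)=\sum_{i<j}n_in_j-a_1^2+2a_1+\sum_{i\ge3}n_i$, and $e(\tG)\ge e(G)$ follows from $a_1a_2\ge a_1^2-a_1+a_2$. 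What remains is the validity check $n_i\ge 2a_1$ (so that $\tG$ is a \ptg{r} graph and \Cref{lem:ptgcount} gives $D_r(\tG)=a_1$), which the paper gets from $a_1\le\sqrt{2n}$ and $n_i\ge 3\sqrt n$, both consequences of the edge hypothesis; your claimed bounds in step (iii) play this role but the estimates must be done for this construction, not the one you proposed.
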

\begin{proof}
If $G$ is $r$-partite, then we can just take $\tG=T_r(n)$, so let us assume that $G$ is not $r$-partite. By \Cref{lem:almostpartite}, there is a vertex $v$ such that $G-v$ is $r$-partite, say with parts $U_1, \dots, U_r$ of size $n_1,\dots, n_r$. Let $a_i$ be the number of neighbors of $v$ in $U_i$. We may assume that $a_1\le \dots\le a_r$. Then clearly, $1\le D_r(G)\le a_1$. We claim that $\tG = L_r[1,a_1,a_1,n_1-a_1,n_2-a_1,n_3,n_4\dots,n_r]$ works.

To show this, note that $G$ has 
\begin{equation} \label{eq:denseedges}
 e(G) \le \sum_{i<j} n_in_j - a_1a_2 + \sum_{i\in [r]} a_i 
\end{equation}
edges. This is because there are $\sum_{i<j} n_in_j$ potential edges in the $r$-partite graph induced by $U = U_1\cup\dots \cup U_r$, but the neighborhood of $v$ is $K_r$-free, so by \Cref{folklore}, at least $a_1a_2$ of these edges are missing. The number of edges in $G$ not induced by $U$ is precisely $\sum_{i\in [r]} a_i$.

On the other hand, 
\[
e(\tG) = \sum_{i<j} n_in_j - a_1^2 + 2a_1 + \sum_{i=3}^r n_i \ge  \sum_{i<j} n_in_j - a_1^2 + a_1-a_2 + \sum_{i\in [r]} a_i.
\]
As $a_1a_2 \ge a_1^2-a_1+a_2$ for any positive integers $a_2\ge a_1$, we get $e(\tG)\ge e(G)$.

To conclude the argument, it is enough to prove that $n_i\ge 2a_1$ for every $i\in[r]$. Indeed, this will establish that $\tG$ is a \ptg{r} graph, and, using \Cref{lem:ptgcount}, imply that $D_r(\tG) = a_1$. We can show this through a fairly straightforward calculation.

\medskip
As the number of edges in an $r$-partite graph is maximized by the Tur\'an graph, we have $\sum_{i<j} n_in_j \le t_r(n)$. Combining this with \eqref{eq:denseedges}, we get $e(G) \le t_r(n) - a_1a_2 + n$. But we assumed that $e(G) >  t_r(n) - n$, so $a_1\le \sqrt{2n}$.

On the other hand, suppose that $n_{i'} \le 3\sqrt{n}$ for some $i'\in [r]$, and let us define $\mathbf{n} = (n_1,\dots,n_r)$ and $\mathbf{n}'=(n_1,\dots,n_{i'-1},n_{i'+1},\dots,n_r)$. Once again, the maximality of Tur\'an graphs gives
\[
\sum_{i<j} n_in_j = e\left(K_{\mathbf{n}}\right) \le e\left(K_{\mathbf{n}'}\right) + n_{i'}n \le t_{r-1}(n-n_{i'}) + n_{i'}n \le t_{r-1}(n) + n_{i'}n.
\]
We can therefore further bound \eqref{eq:denseedges} as
\[
e(G) \le t_{r-1}(n) + 3n^{3/2} + n \le \frac{r-2}{r-1}\cdot \frac{n^2}{2} + 4n^{3/2} \le \frac{r-1}{r}\cdot \frac{n^2}{2} - \frac{n^2}{2r^2} + \frac{n^2}{4r^2} \le t_r(n) - n,
\]
using $n\ge 2^8r^4$ and $\frac{r-1}{r}\cdot \frac{n^2}{2}+n \ge t_r(n)\ge \frac{r-1}{r}\cdot \frac{n^2}{2} - n$ from \Cref{lem:turan}. But this contradicts our assumption on $e(G)$, so indeed, $n_i\ge 3\sqrt{n} \ge 2a_1$ for every $i\in[r]$.
\end{proof}

\section{Proof of \Cref{pentstab}}  \label{sec:mainproof}

It will be more convenient for us to prove the following, slightly weaker analog of \Cref{pentstab}.

\begin{thm}\label{thm:weakpentstab}
For every $r \ge 2$ there is a $\delta_r > 0$ such that the following holds: If $G$ is a $K_{r+1}$-free graph on $n$ vertices with $e(G) \ge t_r(n) - \delta_r n^2$ edges, then there is a \ptg{r} graph $\tG$ on $n$ vertices with $e(\tG) \ge e(G)$ and $D_r(\tG) \ge D_r(G)$.
\end{thm}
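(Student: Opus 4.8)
The plan is to reduce the statement to a single sharp inequality bounding $D_r(G)$ in terms of the deficiency $\tau := t_r(n)-e(G)$, and then exhibit a matching \ptg{r} graph. First I would dispose of the easy cases: if $G$ is $r$-partite, take $\tG=T_r(n)$, and if $e(G)\ge t_r(n)-\frac nr(1+1/r^3)$, we are done by \Cref{thm:verydense}. So assume $G$ is not $r$-partite and $\frac nr(1+1/r^3)<\tau\le\delta_r n^2$. Taking $\delta_r$ small, the stability theorem gives a balanced $r$-partition $V_1,\dots,V_r$ of $G$ — part sizes $\nu_1\le\dots\le\nu_r$ all equal to $\lfloor n/r\rfloor$ or $\lceil n/r\rceil$ — with $D_0:=$ (number of internal edges) at most $\eps n^2$, where $\eps=\eps(r)$ is a small constant of our choosing; note $D_0\ge D_r(G)$. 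Writing $m$ for the number of crossing non-edges of this partition, balancedness gives $e(G)=t_r(n)-m+D_0$, i.e. $m=\tau+D_0$. Also, $e(G)\ge t_r(n)-\delta_r n^2$ forces all but a small set $S$ of vertices to have degree at least $(1-O(\sqrt{\delta_r}))\frac{r-1}r n$, a fact I use freely below.

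The crux is a sharp bound $D_0\le D^*(\tau)$, where $D^*(\tau)$ is the largest $D_r(H)$ among \ptg{r} graphs $H$ with $e(H)\ge t_r(n)-\tau$; by the formulas for \ptg{r} graphs (\Cref{lem:ptgcount}) one computes $D^*(\tau)=(1+o(1))\frac{2\sqrt3}{9}\frac{r\tau^{3/2}}{n}$, attained near $x\approx\frac{2r\tau}{3n}$, $y\approx\sqrt{\tau/3}$. Equivalently, I would prove $m\ge\frac{3}{2^{2/3}}(D_0 n/r)^{2/3}-D_0$, with the lower-order terms controlled exactly (no $o(n^2)$ slack is affordable, since $D^*(\tau)\ll n^2$ when $\tau$ is). This is where $K_{r+1}$-freeness enters, through \Cref{folklore} and \Cref{korandi}. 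The engine is: if $uv$ is an internal edge, then $G[\Gamma(u)\cap\Gamma(v)]$ is $K_{r-1}$-free (a $K_{r-1}$ among common neighbours of $u,v$, together with $u$ and $v$, would be a $K_{r+1}$), so \Cref{folklore} applied to this common neighbourhood with the partition $V_1,\dots,V_r$ turns it into crossing non-edges. The subtle point — and why one internal edge does not suffice — is that on a pentagonal graph $\Gamma(u)\cap\Gamma(v)$ lies inside a single part and produces nothing, so I would split on the maximum internal degree $\Delta$: if $\Delta$ is large, a single vertex of large internal degree already has $\gtrsim\Delta n/r$ crossing non-edges in its neighbourhood by \Cref{folklore}; if $\Delta$ is small, \Cref{korandi} extracts a matching of about $D_0/\Delta$ internal edges, and a careful double count — bounding, via the near-balance of the partition, how many matched internal edges can witness a fixed crossing non-edge — delivers the bound. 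The regime where $\tau$ (hence $D_0$) is of order $n^2$ is dispatched by choosing $\delta_r$ so small that $D_0\le\eps n^2\le D^*(\tau)$ holds outright.

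To realise the bound by a \ptg{r} graph: if $D_0$ is so small that \Cref{lem:sampleptg} already gives a \ptg{r} graph with $D_r\ge D_0$ and at least $t_r(n)-\tau$ edges, take that. Otherwise I would redistribute part sizes: with $\nu_1$ a smallest part, set $y:=\lceil(D_0\nu_1/2)^{1/3}\rceil$, $x:=\lceil D_0/y\rceil$, $n_1:=\nu_1-y$, $n_2:=\nu_2-x-y$, $n_i:=\nu_i$ for $i\ge3$, and $\tG:=L_r[x,y,y,n_1,\dots,n_r]$. Since $D_0\le\delta_r n^2$ gives $\nu_1\gg\sqrt{D_0}$, all of these are non-negative and $x\le y\le n_i$ for every $i$, so $\tG$ is a genuine \ptg{r} graph with $D_r(\tG)=xy\ge D_0\ge D_r(G)$ by \Cref{lem:ptgcount}. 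For $e(\tG)\ge e(G)$: the binomial contributions of the parts of $\tG$ cancel against $\sum_i\binom{\nu_i}{2}$, leaving the requirement $m\ge y^2+x(\nu_1-2y)+D_0$; for $y$ near $(D_0\nu_1/2)^{1/3}$ the right-hand side equals $\frac{3}{2^{2/3}}(D_0\nu_1)^{2/3}-D_0+O(1)\le\frac{3}{2^{2/3}}(D_0 n/r)^{2/3}-D_0+O(1)$, which is exactly what the previous step provides. The exceptional set $S$ is reinserted into the parts of $\tG$ with negligible change to the parameters.

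The hard part is the sharp estimate of the second paragraph. Its difficulty is two-fold: it must be tight up to lower-order terms — so the imbalance of the partition, the vertex degrees, the overcounting multiplicities in the matching argument, and all the rounding have to be tracked precisely — and the count must be organised so as not to degenerate on the pentagonal extremal configurations, where the naive common-neighbourhood argument yields no crossing non-edges at all. Everything else — the reductions, the validity of $\tG$ as a \ptg{r} graph, and the final edge count — is bookkeeping once that estimate is in hand.
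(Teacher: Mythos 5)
Your high-level tools are the right ones --- \Cref{folklore}, \Cref{korandi}, the dichotomy on the maximum internal degree $\Delta$, and the fact that the common neighbourhood of an internal edge is $K_{r-1}$-free --- but the skeleton you hang them on has a gap that is fatal for $r\ge 3$: the insistence on an \emph{exactly balanced} $r$-partition with $D_0$ internal edges, together with the target inequality $D_0\le D^*(\tau)$ (equivalently $m\ge y^2+x(\nu_1-2y)+D_0$ for your choice of $x,y$). This is too strong. Let $G$ be the pentagonal Tur\'an graph of \Cref{lem:sampleptg}, with parameters $x,y$, so that $D_r(G)=xy$. Its cheapest $r$-partition --- the standard one with parts $Y_1\cup Z_1,\ X\cup Y_2\cup Z_2,\ Z_3,\dots,Z_r$ --- is not balanced: $|Y_1\cup Z_1|$ is short of $n/r$ by $\Theta(x)$, while every other part is long by $\Theta(x/r)$. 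For $r\ge3$, rebalancing must therefore extract $\Theta(x/r)$ vertices from each $Z_j$ with $j\ge 3$, and every $Z_j$-vertex is complete to whichever of the other parts it is moved into, so each such move creates $\Theta(n/r)$ new internal edges. Hence every balanced $r$-partition of this $G$ has $D_0\ge D_r(G)+\Omega(xn/r)$, which swamps $xy=D^*(\tau)$ because $y\ll n/r$. Your crucial inequality thus fails on the very graph it would have to be tight for, and the $\tG$ you build from $D_0$ satisfies $e(\tG)<e(G)$. (For $r=2$ the issue disappears, since $X$-vertices move between the two parts for free, and your outline there is essentially the Erd\H{o}s--Gy\H{o}ri--Simonovits argument.)

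The paper sidesteps this by \emph{not} requiring balance in \Cref{thm:weakpentstab}: the $r$-partition comes from \Cref{thm:aesos} applied to the high-minimum-degree core produced by \Cref{weeding}, the $|V_i|$ are only controlled to within $O(r^{-10}n)$ of $n/r$, and the parts of $\tG$ are tailored to $|V_1|,\dots,|V_r|$. The edge comparison is then done \emph{structurally} in \Cref{yaycounting}--\ref{restmissing} --- split over the matching classes $A_i,B_i,C_i$ and the good/bad edges --- rather than collapsed to one scalar inequality, and the rebalancing is deferred to the short derivation of \Cref{pentstab} from \Cref{thm:weakpentstab}. If you want to keep a balanced-partition framing, you would have to take $D_0$ from the cheapest (generally unbalanced) partition and then track its balancedness defect exactly in the identity $m=\tau+D_0$, which effectively lands you back at the paper's bookkeeping. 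The ``hard part'' you flag is indeed where the work lies, and your sketch of it is much too thin --- the decomposition of the internal matching by interaction with the $P_i$ and the deficit parameter in \Cref{restmissing} are exactly what prevent the count from degenerating on pentagonal configurations --- but the balance issue above is the more fundamental obstruction to this route.
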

This statement easily implies the full theorem:
\begin{proof}[Proof of \Cref{pentstab}] \Cref{thm:weakpentstab} shows the existence of a \ptg{r} graph $\tG$, such that $e(\tG) \ge e(G)$ and $D_r(\tG) \ge D_r(G)$. Let us choose such a $\tG$ so that $e(\tG)$ is maximum. We claim that this $\tG$ is in fact a \ptt graph.

We know that $\tG=L_r[x,y,y,n_1,\dots,n_r]$ such that $x\le y\le n_i$ for every $i\in [r]$. Note that $e(\tG)\le t_r(n)-y^2-xn_1+xy\le t_r(n)-y^2$, so if $\delta_r<r^{-10}$, then $y\le \frac{n}{4r}$. To show that $\tG$ is a \ptt graph, we just need to check that the numbers $x+y+n_1,x+y+n_2,n_3,\dots,n_r$ do not differ by more than 1. Suppose that the $i$-th of these quantities is the largest among them, and the $j$-th is the smallest. If their difference was at least 2, then the graph $\ttG=L_r[x,y,y,n_1,\dots,n_i-1,\dots,n_j+1,\dots,n_r]$ would have more edges than $\tG$. Also, $x+y+n_i\ge \frac{n}{r}$ and $x\le y\le \frac{n}{4r}$, so $\ttG$ is a \ptg{r} graph with $D_r(\ttG)=xy = D_r(\tG)$. This contradicts the maximality of $\tG$ and establishes the theorem.
\end{proof}

Our proof of \Cref{thm:weakpentstab} divides into two main parts: defining a \ptg{r} graph $\tG$ based on our $G$, and comparing the number of edges in $G$ and $\tG$.
In the first part of the proof, we find an appropriate $r$-partition of $G$, with a large enough matching of internal edges, and use structural considerations to construct a $\tG$ that has at least as many \emph{internal} edges in its standard $r$-partition as $G$. Then in the second part, we use the $K_{r+1}$-freeness of $G$ to prove that it misses many of its \emph{crossing} edges, and ultimately show that $\tG$ has more crossing edges in its standard $r$-partition.

\subsection{The candidate \ptg{r} graph $\tG$}

\begin{proof}[Proof of \Cref{thm:weakpentstab}]

We will start with defining an $r$-partition on $G$.

Let $\delta_r = r^{-60}$, and suppose our $K_{r+1}$-free graph $G=(V,E)$ has $t_r(n)-\delta n^2$ edges for some $\delta \in (0,r^{-60})$. We may assume that $\delta n^2\ge 1$, and hence $n\ge \delta^{-1/2} \ge r^{20}$. Now if $\delta n^2 \le \frac{n}{r}(1+1/r^3)$, then we can apply \Cref{thm:verydense}, noting that $n\ge r^{20} \ge 2^8r^4$, to obtain the desired $\tG$. So we may also assume that $\delta n^2 > \frac{n}{r}(1+1/r^3)$, and in particular, $n\ge \frac{1}{\delta r}$.

\medskip
We first show that $G$ contains a large induced subgraph with high minimum degree.

\begin{prop}\label{weeding}
There is a vertex subset $S \subs V$ with $|S| \le 2\delta r^{10}n$ such that for all $v \in V\setminus S$,
\[ d_{G - S}(v) \ge n\left(\tfrac{r-1}{r} - r^{-10}\right). \]
\end{prop}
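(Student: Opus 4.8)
The plan is to run the obvious greedy cleaning process: as long as the current graph contains a vertex of degree less than $n\bigl(\tfrac{r-1}{r}-r^{-10}\bigr)$, delete such a vertex; let $S$ be the set of all vertices ever deleted. Since the process terminates and stops only when no low-degree vertex remains, every $v\in V\setminus S$ satisfies $d_{G-S}(v)\ge n\bigl(\tfrac{r-1}{r}-r^{-10}\bigr)$. Thus the whole content of the proposition is the size bound $|S|\le 2\delta r^{10}n$.

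To bound $|S|$, I would double-count edges. Fix any $s\le |S|$ and let $S'$ be the set of the first $s$ vertices deleted. On one hand, each deletion removes fewer than $n\bigl(\tfrac{r-1}{r}-r^{-10}\bigr)$ edges (that is precisely why the vertex was deleted), so
\[ e(G-S') > e(G) - s\,n\Bigl(\tfrac{r-1}{r}-r^{-10}\Bigr) = t_r(n) - \delta n^2 - s\,n\Bigl(\tfrac{r-1}{r}-r^{-10}\Bigr). \]
On the other hand, $G-S'$ is a $K_{r+1}$-free graph on $n-s$ vertices, so by Tur\'an's Theorem and \Cref{lem:turan}(2) applied with $n'=n-s$,
\[ e(G-S') \le t_r(n-s) \le t_r(n) - \tfrac{r-1}{r}s(n-s). \]
Combining the two displays, cancelling $t_r(n)$, and rearranging gives the key inequality
\[ \delta n^2 + \tfrac{r-1}{r}s^2 > s\,n\,r^{-10}, \qquad \text{valid for every } s\le|S|. \]

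Finally I would derive a contradiction from the assumption $|S|>2\delta r^{10}n$. From the setup we have $n\ge \tfrac{1}{\delta r}$, so $\delta r^{10}n\ge r^{9}\ge 1$; hence $s:=\lceil 2\delta r^{10}n\rceil$ satisfies $2\delta r^{10}n\le s\le 3\delta r^{10}n$, and $s\le|S|$ because $|S|$ is an integer exceeding $2\delta r^{10}n$. Plugging this $s$ into the key inequality and using $\tfrac{r-1}{r}s^2\le s^2\le (3\delta r^{10}n)\,s$, we obtain $s\,n\bigl(r^{-10}-3\delta r^{10}\bigr)<\delta n^2$. Since $\delta\le r^{-60}$ and $r\ge 2$, we have $3\delta r^{10}\le 3r^{-50}<\tfrac12 r^{-10}$, so $r^{-10}-3\delta r^{10}>\tfrac12 r^{-10}$ and therefore $\tfrac12 s\,n\,r^{-10}<\delta n^2$; but $s\ge 2\delta r^{10}n$ forces $\tfrac12 s\,n\,r^{-10}\ge \delta n^2$, a contradiction. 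Hence $|S|\le 2\delta r^{10}n$, completing the proof.

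This is a routine first step in a stability argument, so I do not expect a genuine obstacle; the only care required is to check that the fixed choice $\delta_r=r^{-60}$ leaves enough slack in the crude estimates above — namely $\delta r^{10}n\ge 1$ and $3\delta r^{10}<\tfrac12 r^{-10}$ — which it does comfortably given $r\ge 2$ and $n\ge r^{20}$.
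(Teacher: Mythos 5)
Your proof is correct and follows essentially the same route as the paper: both run the greedy removal of low-degree vertices, stop the process at roughly $\lceil 2\delta r^{10}n\rceil$ deletions, lower-bound the edge count of the remaining graph by subtracting the deleted degrees, and upper-bound it via Tur\'an's theorem together with \Cref{lem:turan}(2) to force a contradiction with $\delta<r^{-60}$. The only (immaterial) differences are cosmetic: you use the slack bound $s\le 3\delta r^{10}n$ where the paper uses $|B|\le 4\delta r^{10}n$, and you absorb the quadratic error term via $3\delta r^{10}<\tfrac12 r^{-10}$ rather than the paper's $16\delta r^{20}<1$.
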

\begin{proof}
Let us iteratively remove vertices of degree less than $n(\frac{r-1}{r} - r^{-10})$. If this procedure stops with at most $2\delta r^{10}n$ removals, then we are done by choosing $S$ to be the set of removed vertices. So suppose otherwise, and let $B$ be the set of the first $\ceil{ 2\delta r^{10} n}$ vertices deleted. Then the number of edges in the graph $J = G - B$ can be bounded by
\[ 		e(J) \ge e(G) - n\left(\tfrac{r-1}{r} - r^{-10}\right)|B| = t_r(n) - \delta n^2 - n\left(\tfrac{r-1}{r} - r^{-10}\right)|B|.
\]
By \Cref{lem:turan}, we have $t_r(n) \ge t_r(n-|B|) + \frac{r-1}{r}(n-|B|)|B|$, and hence
\[
	e(J) \ge t_r(|J|) - \delta n^2 + r^{-10} n |B| - \tfrac{r-1}{r}|B|^2.
\]
Note that $|B|\ge 2$ (as $2\delta r^{10} n \ge 2r^9>1$), so $2\delta r^{10} n \le |B| \le 4\delta r^{10}n$. Using $1 > \delta r^{60}> 16\delta r^{20}$, this yields
\[ r^{-10} n|B| \ge 2\delta n^2 > \delta n^2 + 16 \delta^2 r^{20}n^2 \ge \delta n^2 + |B|^2.  \]
But then $e(J) > t_r(|J|)$, contradicting the fact that $J$ is $K_{r+1}$-free.
\end{proof}

\Cref{thm:aesos} implies that $G - S$ is $r$-partite. Let $U_1\cup \dots\cup U_r$ be an $r$-partition of $G-S$. By the minimum degree condition of $G - S$, every vertex $x\in U_i$ has at least $n(\frac{r-1}{r}-r^{-10})$ neighbors in $G - S - U_i$, so $|U_i|\le n(\frac{1}{r} + r^{-10}) - |S|$ for each $i$. On the other hand, $|U_i| \ge n  - |S| - \sum_{j\ne i} |U_j|$, so we get that for every $i$,
\begin{equation} \label{usizebound}
	|U_i| \ge n\left(\tfrac{1}{r} - (r-1)r^{-10}\right). 
\end{equation}
This also means that the neighborhood of each vertex in $U_i$ misses at most $r^{-9}n$ vertices in $\bigcup_{j\neq i}U_j$ and so the number of crossing edges missing between the $U_i$ is at most $r^{-9}n^2$.

\medskip
Now let us extend this partition into an $r$-partition $V = V_1\cup \dots\cup V_r$ of the entire vertex set of $G$ that maximizes the number of crossing edges, assuming $U_i \subs V_i$. In particular, each vertex of $S$ has at most as many neighbors in its own part as in any other part, i.e., for $s\in S\cap V_i$,
\begin{equation} \label{eq:maxcut}
	|\Gamma(s) \cap V_i| = \min_{j \in [r]}|\Gamma(s) \cap V_j|.
\end{equation}

Let us define $\Delta$ to be the maximum internal degree of $G$ in this partition, i.e.,
\[
	\Delta = \max_{i\in [r]} \max_{v\in V_i} |\Gamma(v) \cap V_i|
\]

\begin{claim} \label{Ddef}
We may assume that $\Delta$ is the internal degree of some vertex $u\in S$, and that
\[ 6|S|\le \Delta \le 2r^{-4.5} n. \]
\end{claim}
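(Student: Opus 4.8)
The plan is to read the claim off the construction of $S$ and the partition, establishing the upper bound on $\Delta$ outright and treating the lower bound $6|S|\le\Delta$ as a reduction. First, two easy reductions and one structural observation. If $S=\emptyset$, then $G=G-S$ has minimum degree larger than $\tfrac{3r-4}{3r-1}n$, so $G$ is $r$-partite by \Cref{thm:aesos} and we may take $\tG=T_r(n)$; hence we may assume $|S|\ge1$. The key point is that, since $G-S$ is $r$-partite with parts $U_1,\dots,U_r$, no edge of $G$ joins two vertices of the same $U_i$. Consequently every vertex $v\in U_i$ has all of its internal neighbours (its neighbours in $V_i$) inside $V_i\setminus U_i=S\cap V_i$, so the internal degree of $v$ is at most $|S|$; moreover every internal edge of $G$ meets $S$, so the total number $E$ of internal edges is at most $|S|\Delta$ (bound the internal degree of each $s\in S$ by $\Delta$).

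Next I would prove $\Delta\le 2r^{-4.5}n$. If the maximum internal degree is attained by a vertex of some $U_i$, then by the observation $\Delta\le|S|\le 2\delta r^{10}n$, and we are done. Otherwise it is attained by some $u\in S$, say $u\in V_i$. By the max-cut property \eqref{eq:maxcut}, $\Delta=|\Gamma(u)\cap V_i|$ is the smallest of the numbers $|\Gamma(u)\cap V_j|$, so in the $r$-partition of $\Gamma(u)$ induced by $V_1,\dots,V_r$ every part has at least $\Delta$ vertices. Since $G$ is $K_{r+1}$-free, $\Gamma(u)$ is $K_r$-free, so applying \Cref{folklore} to the crossing edges inside $\Gamma(u)$ shows that at least $\Delta^2$ crossing pairs inside $\Gamma(u)$ are non-edges of $G$; in particular the partition $V_1,\dots,V_r$ is missing at least $\Delta^2$ crossing edges. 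Comparing $e(G)\ge t_r(n)-\delta n^2$ with the bound $e(G)\le t_r(n)-\Delta^2+E$ (using $\sum_{i<j}|V_i||V_j|\le t_r(n)$ and the $\Delta^2$ absent crossing edges) gives $\Delta^2\le E+\delta n^2\le|S|\Delta+\delta n^2$, hence $\Delta\le|S|+\sqrt{\delta}\,n$. Either way $\Delta\le|S|+\sqrt{\delta}\,n\le 2\delta r^{10}n+\sqrt{\delta}\,n\le 2r^{-4.5}n$, using $\delta\le r^{-60}$.

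It remains to place the maximizer in $S$ and to justify $6|S|\le\Delta$. The first is immediate from the second: any vertex of $\bigcup_i U_i$ has internal degree at most $|S|<6|S|\le\Delta$, so the maximizer lies in $S$. For the lower bound I would argue that \emph{we may assume} $6|S|\le\Delta$: if instead $\Delta<6|S|$, then $D_r(G)\le E\le|S|\Delta<6|S|^2\le 24\delta^2r^{20}n^2$, which is minuscule compared with $\delta n^2$, and we can dispatch this $G$ directly. Take the \ptt graph $\tG$ from \Cref{lem:sampleptg} with parameter $s$ as large as possible subject to $\tfrac{sn}{r}(1+1/r^3)\le\delta n^2$; such an $s\ge1$ exists and satisfies $s\le n/r^4$, because we are past the regime of \Cref{thm:verydense} (so $\delta n^2>\tfrac{n}{r}(1+1/r^3)$) and $\delta<r^{-60}$. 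Then $e(\tG)\ge t_r(n)-\tfrac{sn}{r}(1+1/r^3)\ge t_r(n)-\delta n^2=e(G)$, and a short computation gives $D_r(\tG)\ge\tfrac{\sqrt{s^3n}}{r^2}\ge D_r(G)$, so \Cref{thm:weakpentstab} holds for this $G$; in the residual sub-case, where $\delta n^2$ is so small that the available $s$ is only a constant, one instead notes that $|S|$ and hence $D_r(G)$ are bounded in terms of $r$ alone and falls back on Brouwer's Theorem (\Cref{thm:rpartstab}).

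The routine ingredients here are the observation about the $U_i$, the \Cref{folklore} estimate, and the edge count. The step I expect to be the main obstacle is the last one: making the reduction ``$6|S|\le\Delta$ or else finish'' fully rigorous, i.e.\ checking that whenever $\Delta<6|S|$ the tiny quantity $D_r(G)$ is genuinely dominated by some \ptt graph from \Cref{lem:sampleptg} across the entire admissible range of $\delta$ and $n$. The generous slack afforded by the choice $\delta_r=r^{-60}$ is precisely what should make this bookkeeping go through.
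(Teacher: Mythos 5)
Your proposal follows essentially the same route as the paper, and its main line is correct. The reduction is identical: if $\Delta<6|S|$ then $D_r(G)\le|S|\Delta<6|S|^2\le 24\delta^2r^{20}n^2$, and \Cref{lem:sampleptg} with $s=\floor{\delta rn/(1+1/r^3)}$ supplies a better \ptg{r} competitor, after which $\Delta\ge6|S|$ forces the vertex of maximum internal degree into $S$, exactly as in the paper. The one genuine difference is how you close the upper bound on $\Delta$: the paper applies \Cref{folklore} to the sets $\Gamma(u)\cap U_i$ (each of size at least $\Delta-|S|\ge 5\Delta/6$) and compares with the earlier bound of $r^{-9}n^2$ on crossing edges missing between the $U_i$, whereas you apply \Cref{folklore} to the sets $\Gamma(u)\cap V_i$ and compare globally with $t_r(n)$, getting $\Delta^2\le|S|\Delta+\delta n^2$, hence $\Delta\le|S|+\sqrt{\delta}\,n\le 2r^{-4.5}n$. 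Both are valid; your version is self-contained (it does not use the $r^{-9}n^2$ estimate) and in fact gives a stronger bound on $\Delta$, though the weaker bound is all that is used later.

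Two remarks on the step you flagged as the main obstacle. First, the ``residual sub-case'' does not exist: since we are past \Cref{thm:verydense} we have $\delta n^2>\frac nr(1+1/r^3)$, hence $s\ge1$, and then $s=\floor{\delta rn/(1+1/r^3)}\ge\frac{\delta rn}{4}$, so
\[
D_r(\tG)\ \ge\ \frac{\sqrt{s^3n}}{r^2}\ \ge\ \frac{(\delta rn/4)^{3/2}\sqrt n}{r^2}\ =\ \frac{\delta^{3/2}n^2}{8\sqrt r}\ >\ 24\delta^2r^{20}n^2\ \ge\ D_r(G),
\]
where the strict inequality amounts to $192\sqrt{\delta}\,r^{20.5}<192\,r^{-9.5}<1$ for $r\ge2$ and $\delta<r^{-60}$. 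The comparison is therefore uniform over all $s\ge1$, which is precisely how the paper argues. Second, the fallback you sketch for that (nonexistent) sub-case would not work as stated: in this regime $e(G)=t_r(n)-\delta n^2<t_r(n)-\floor{n/r}+2$, so \Cref{thm:rpartstab} cannot be invoked, and knowing only that $D_r(G)$ is bounded in terms of $r$ does not by itself produce a \ptg{r} graph dominating $G$. Delete that sentence, carry out the display above, and your proof is complete.
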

\begin{proof}
Note that all internal edges are incident with $S$ and so $D_r(G) \le |S| \Delta$. If $\Delta$ is smaller than $6|S|$, then $D_r(G)\le 6|S|^2\le 24\delta^2 r^{20}n^2$. We claim that there is a \ptg{r} graph $\tG$ with at least $t_r(n)-\delta n^2$ edges such that $D_r(\tG)$ is larger than this. Indeed, apply \Cref{lem:sampleptg} with $s=\floor{\frac{\delta rn}{1+1/r^3}}$ to obtain the graph $\tG$ with at least $t_r(n)-\delta n^2$ edges and $D_r(\tG)\ge \frac{\sqrt{s^3n}}{r^2}$. Our previous assumption that $\delta n^2 >\frac{n}{r}(1+1/r^3)$ implies that $s\ge 1$, and therefore $s\ge \frac{\delta rn}{4}$. This means that
\[ 
D_r(\tG) \ge \frac{\delta^{3/2}r^{3/2}n^2}{8r^2} > \frac{\delta^2 r^{29} n^2}{8} > 24\delta^2 r^{20}n^2 \ge D_r(G),
\]
as required (we used $1>\sqrt{\delta}r^{30}$ and $r\ge 2$).

So we may assume that $\Delta\ge 6|S|$. In particular, as the internal degree of each vertex in $V\setminus S$ is at most $|S|$, a vertex of maximum internal degree $\Delta$ must lie in $S$. Let $u$ be any such vertex.

Now we see from \eqref{eq:maxcut} that $|\Gamma(u) \cap U_i| \ge \Delta -|S| \ge \frac{5\Delta}{6}$ for every $i \in [r]$. Since $\Gamma(u)$ is $K_r$-free, \Cref{folklore} tells us that there are at least $\left(\tfrac{5\Delta}{6}\right)^2 \ge \Delta^2/2$ crossing edges missing between the $U_i$. On the other hand, we have seen that there are at most $r^{-9}n^2$ such edges missing, so $\Delta \le 2r^{-4.5}n$.
\end{proof}

Let $u \in S$ be the vertex from \Cref{Ddef}. By \eqref{eq:maxcut}, it has at least $\Delta$ neighbors in each $V_i$. For each $i \in [r]$, fix a set $P_i \subs \Gamma(u) \cap V_i$ with $|P_i| = \Delta$.

\medskip
We now come to finding a suitable matching consisting of internal edges. Let $H = \bigcup_{i \in [r]}G[V_i]$ be the subgraph of $G$ containing only the internal edges. Then $H$ has at most $\Delta|S|$ edges and maximum degree $\Delta$. Let $k=\ceil{\frac{e(H)}{\Delta}}$ and note that $k \le |S|$, so $\Delta \ge 6|S| \ge 2k$. Therefore, by \Cref{korandi}, we can find a matching $M$ of size $k$ in $H$.

For each $i \in [r]$, let $M_i = M[V_i]$ be the set of matching edges in $V_i$. Further split each $M_i$ into three sets $M_i = A_i \cup B_i \cup C_i$ according to the matching pairs' interaction with $P_i$:
\begin{align*}
	A_i &= \left\{uv \in M_i : u,v \notin P_i\right\}, \\
	B_i &= \left\{uv \in M_i : u \in P_i, v \notin P_i\right\}, \\
	C_i &= \left\{uv \in M_i : u,v \in P_i\right\}.
\end{align*}
Then define $a_i = |A_i|$, $b_i = |B_i|$ and $c_i = |C_i|$, and set $a = \sum_{i\in [r]}a_i$, $b = \sum_{i\in [r]}b_i$, and $c = \sum_{i\in [r]}c_i$ (so we have $k=a+b+c$). Note that if $V^A_i, V^B_i, V^C_i$ and $V^M_i$ denote the vertex sets of the matchings $A_i,B_i,C_i$ and $M_i$ respectively, then $|V^A_i| = 2a_i$, $|V^B_i| = 2b_i$ and $|V^C_i| = 2c_i$. We denote the unions over $i\in [r]$ by $V^A, V^B, V^C$ and $V^M$, so $|V^M| = 2k$ (see \Cref{fig:struct}).

\medskip
Finally, we set $R_i = V_i \setminus (P_i \cup V^M_i)$ and $\kappa_i = |R_i|$. With this notation at hand, we note that $|V_i| = \kappa_i + \Delta + 2a_i+b_i$ for each $i \in [r]$. To bound $\kappa_i$ from below, recall that $U_i\subs V_i$ is an independent set, so at most $k\le |S|$ of its vertices are covered by $M$. So by \eqref{usizebound}, $\delta < r^{-60}$, \Cref{weeding} and \Cref{Ddef}, we have
\begin{align*}
	\kappa_i \ge |U_i| - |S| - \Delta &\ge n\left(\frac{1}{r} - (r-1)r^{-10}\right) - 2\delta r^{10} n - 2r^{-4.5}n \nonumber \\
	&\ge n\left(r^{-1} - r^{-9} - 2r^{-50} - 2r^{-4.5}\right) \nonumber \\
	&\ge r^{-4.5}n\left(r^{3.5} - 3\right) \ge 8r^{-4.5}n \ge 4\Delta.
\end{align*}
We may assume without loss of generality that $\kappa_1 \le \kappa_2 \le \dots \le \kappa_r$. Together with \Cref{Ddef}, we get the following relationship between our quantities, which we will use throughout the rest of the proof:
\begin{equation} \label{eq:consts}
 \kappa_r \ge \dots \ge \kappa_2\ge \kappa_1 \ge 4\Delta \ge 24k.
\end{equation}

\begin{figure}[t!]
\begin{center}
\begin{tikzpicture}[scale=1.05]

\foreach \j in {1,...,4}
{
	\draw[rounded corners=2mm] (5,0 - 2*\j) rectangle (10,1.2 - 2*\j); 

	\draw[rounded corners=2mm] (1,0 - 2*\j) rectangle (4,1.2 - 2*\j); 
	
	\draw[gray,thick] (.7,.9 - 2*\j) -- (1.3,.9 - 2*\j) ;
	\draw[gray,thick] (.7,.6 - 2*\j) -- (1.3,.6 - 2*\j) ;
	\draw[gray,thick] (.7,.3 - 2*\j) -- (1.3,.3 - 2*\j) ;

	\draw[opacity=.5, rounded corners=2mm] (.5,0 - 2*\j) rectangle (4,1.2 - 2*\j); 

	\draw[blue,densely dotted,thick] (1.5,.75 - 2*\j) -- (2.1,.75 - 2*\j) ;
	\draw[blue,densely dotted,thick] (1.5,.45 - 2*\j) -- (2.1,.45 - 2*\j) ;

	\draw[red,densely dashed,thick] (-.6,.75 - 2*\j) -- (0,.75 - 2*\j) ;
	\draw[red,densely dashed,thick] (-.6,.45 - 2*\j) -- (0,.45 - 2*\j) ;

	\draw[densely dotted,rounded corners=2mm] (-1,-.1 - 2*\j) rectangle (10.2,1.3 - 2*\j); 

}

\draw[decoration={brace,amplitude=5pt,raise=-2pt},decorate] (5,-.35) -- node[above=2pt] {$R$} (10,-.35);
\draw[gray,decoration={brace,amplitude=5pt,raise=-2pt},decorate] (.5,-.35) -- node[above=2pt] {$Q = P \cup V^B$} (4,-.35);
\draw[red,decoration={brace,amplitude=5pt,raise=-2pt},decorate] (-.7,-.35) -- node[above=2pt] {$V^A$} (.1,-.35);
\draw[decoration={brace,amplitude=5pt,mirror,raise=-2pt},decorate] (-.7,-8.5) -- node[below=2pt] {$V^M$} (2.2,-8.5);
\draw[decoration={brace,amplitude=5pt,mirror,raise=-2pt},decorate] (11,-.1-2) -- node[right=2pt] {$V_1$} (11,1.3-2);

\node at (7.5,0.6 - 2) {$R_1$};
\node at (3,0.6 - 2) {$P_1$};

{\footnotesize
\node at (7.5,0.6 - 2*3) {$|R_i| = \kappa_i$};
\node at (3,0.6 - 2*3) {$|P_i| = \Delta$};
\node at (12.5,0.6 - 2*3) {$|V_i| = \kappa_i + \Delta + 2a_i + b_i$};
}

\end{tikzpicture}
\caption{The structure of $G$}
\label{fig:struct}
\end{center}
\end{figure}
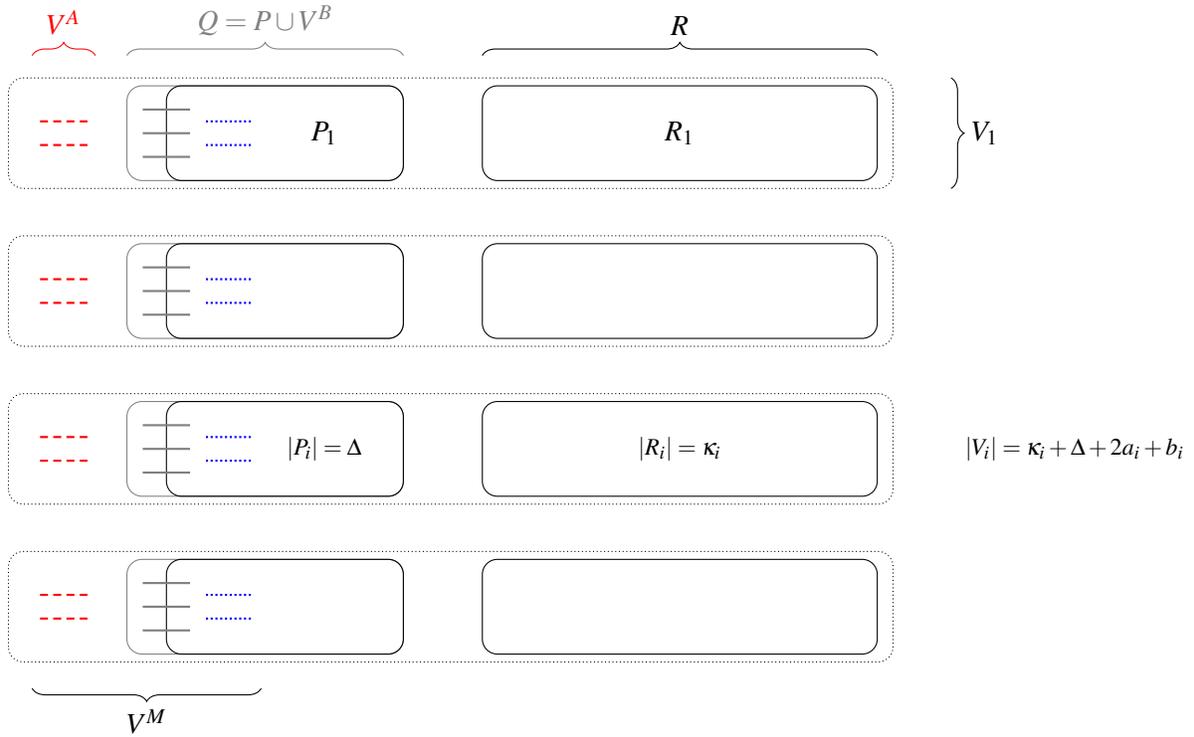

\medskip
We are now ready to introduce our candidate \ptg{r} graph that will satisfy \Cref{thm:weakpentstab}. Let $\tG = L_r[k,\Delta,\Delta,n_1,\dots,n_r]$ be the graph on vertex set $X\cup Y_1 \cup Y_2\cup Z_1\cup \dots \cup Z_r$ as defined in the introduction, where $|X|=k$, $|Y_1|=|Y_2|=\Delta$, $|Z_j|=n_j = \kappa_j+\Delta+2a_j+b_j$ for $j\ge 3$, and
\begin{align*}
	|Z_1| &= n_1 = \kappa_1 + a_1 - c_1\\
	|Z_2| &= n_2 = \kappa_2 + a_1+b_1+c_1+2a_2+b_2- k.
\end{align*}
Note that $|V_j|=|Z_j|$ for $j\ge 3$, and $|V_1|+|V_2| = |X|+|Y_1|+|Y_2|+|Z_1|+|Z_2|$, so $G$ and $\tG$ have an equal number of vertices.

The proof of \Cref{thm:weakpentstab} therefore reduces to establishing \Cref{prop:compare} below.
\end{proof}

\begin{prop} \label{prop:compare}
$\tG$ satisfies both $e(\tG) \ge e(G)$ and $D_r(\tG) \ge D_r(G)$.
\end{prop}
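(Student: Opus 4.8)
The plan is to prove the two inequalities $D_r(\tG)\ge D_r(G)$ and $e(\tG)\ge e(G)$ separately, using the structure of $G$ established above.

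\textbf{The $D_r$ inequality.} By \Cref{lem:ptgcount}, $D_r(\tG)=|X|\cdot|Y_1|=k\Delta$. On the other hand, every internal edge of $G$ in the partition $V=V_1\cup\dots\cup V_r$ is incident to a vertex of $S$, and each such vertex has internal degree at most $\Delta$; moreover the total number of internal edges is $e(H)\le k\Delta$ by the choice of $k=\ceil{e(H)/\Delta}$. Hence $D_r(G)\le e(H)\le k\Delta=D_r(\tG)$. This part should be essentially immediate from the definitions; the only care needed is to recall that $D_r(G)$ is at most the number of internal edges in \emph{any} $r$-partition, and we have one with at most $k\Delta$ such edges.

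\textbf{The edge-count inequality.} This is the real work. First I would write down $e(\tG)$ exactly: since $\tG$ is the blowup $L_r[k,\Delta,\Delta,n_1,\dots,n_r]$, we have $e(\tG)=t_r$-type expression minus the missing crossing pairs, namely (with the standard $r$-partition putting $Y_1\cup Z_1$, $X\cup Y_2\cup Z_2$, $Z_3,\dots,Z_r$ in the parts) $e(\tG)=e(K_{|V_1|,\dots,|V_r|})$ adjusted by the pentagon structure — concretely $e(\tG)=\sum_{i<j}|V_i||V_j| - \Delta^2 - k n_1 + k\Delta$ up to the exact bookkeeping of which $C_5$-nonedges are removed. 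For $e(G)$, I would bound it from above by splitting edges into those inside $U=U_1\cup\dots\cup U_r$ (crossing plus the internal matching-related ones) and those incident to $S$. The key input is \Cref{folklore} applied to $\Gamma(u)$ for the vertex $u$ of maximum internal degree: since $|\Gamma(u)\cap V_i|\ge\Delta$ for all $i$, at least $\Delta^2$ crossing pairs between the $V_i$ are missing from $G$. One then needs to account carefully for how the matching edges $A_i,B_i,C_i$ sit relative to $P_i$, because these determine how many further crossing edges must be absent (a matching edge inside $P_i$ forces non-edges from both endpoints to the other $P_j$'s, etc.), and to relate the quantities $a,b,c,\kappa_i$ appearing in the definition of the $n_j$ to the edge deficit of $G$. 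The inequalities in \eqref{eq:consts} (that $\kappa_1\ge 4\Delta\ge 24k$) are what make all the error terms negligible.

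\textbf{Main obstacle.} I expect the hard part to be the precise edge-accounting showing $e(\tG)\ge e(G)$: one must produce enough \emph{distinct} missing crossing edges in $G$ to compensate for (i) the $\Delta^2$ crossing pairs deleted from $\tG$ inside the pentagon, and (ii) the discrepancy between $\sum_{i<j}|V_i||V_j|$ and $\sum_{i<j}n_in_j$ coming from the redistribution of vertices among the parts. The $\Delta^2$ missing edges around $\Gamma(u)$ handle (i), but showing they are not double-counted against the deficit used for (ii) — and handling the internal matching edges, which contribute to $e(G)$ but whose endpoints also witness missing crossing edges via $K_{r+1}$-freeness — requires choosing the sets $P_i$, $R_i$, $V^A_i$, $V^B_i$, $V^C_i$ so that the charging is injective. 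I would organize this by fixing the standard $r$-partition of $\tG$, exhibiting an explicit injection from "edges of $\tG$ missing from the corresponding configuration in $G$" into "edges of $G$ missing from $\tG$", using the matching $M$ and the $K_r$-freeness of neighborhoods, and then checking the leftover terms are controlled by \eqref{eq:consts}. Everything else — the $D_r$ bound and the algebraic simplifications — should be routine once this combinatorial charging is in place.
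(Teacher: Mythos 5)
Your treatment of the $D_r$ inequality is complete and identical to the paper's: $D_r(\tG)=k\Delta$ by \Cref{lem:ptgcount} together with \eqref{eq:consts}, and $D_r(G)\le e(H)\le k\Delta$ by the choice of $k=\ceil{e(H)/\Delta}$.

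For the edge inequality, your plan correctly identifies the right ingredients and the central difficulty, but you stop exactly where the proof becomes hard, and you never carry it out. Concretely, you need to do three things that you only gesture at. First, you must pin down the exact correction term for $e(\tG)$: the paper (in \Cref{yaycounting}) shows the crossing edges of $\tG$ number at least
\[
\sum_{i<j}|V_i||V_j| - \Bigl(\Delta^2+b_1b_2+(a_1+b_1+c_1)\kappa_2+(k-a_1-b_1-c_1)\kappa_1+(a_1+a_2)\tfrac{\Delta}{2}\Bigr),
\]
and this computation already uses \eqref{eq:consts} in a nonobvious way to absorb a leftover $\Lambda$ term. Your writeup does not reach this expression. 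Second, you need a mechanism guaranteeing the missing crossing edges you exhibit in $G$ are not double-counted. You propose constructing ``an explicit injection,'' but the paper instead partitions the vertices into $Q=P\cup V^B$ and the rest and counts two disjoint classes of missing edges: those induced by $Q$ (\Cref{doubleup}, which yields $\Delta^2+b_1b_2$ via the families $\cF$ and $\cG$, not merely $\Delta^2$ from $\Gamma(u)$ alone) and those with at most one end in $Q$ (\Cref{restmissing}). You correctly anticipate the double-counting issue but do not resolve it. Third, and most seriously, the entire content of \Cref{restmissing} — the deficit parameter $\tau$, the split of $A_i$ into good and bad edges, the use of \Cref{folklore} on common neighborhoods in the $R_j$'s, and the case analysis when $\tau>0$ — is absent from your proposal; this lemma supplies the terms $(a_1+b_1+c_1)\kappa_2+(k-a_1-b_1-c_1)\kappa_1+(a_1+a_2)\Delta/2$ that you need, and it is where \eqref{eq:consts} is truly load-bearing (e.g.\ $\kappa_2/4\ge\Delta$ and $\kappa_2/2\ge k$). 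In short, you have correctly named the obstacle — producing enough \emph{distinct} missing crossing edges to cover both the pentagon deficit and the redistribution discrepancy — but you have not produced the argument that overcomes it.
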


\subsection{Comparing $G$ and $\tG$}

\begin{proof}[Proof of \Cref{prop:compare}]
\Cref{lem:ptgcount} and \eqref{eq:consts} give $D_r(\tG) = k\Delta$. Here the definition of $k$ implies $k\Delta\ge e(H)$ and we clearly have $e(H) \ge D_r(G)$, thus $D_r(\tG)\ge D_r(G)$, and $\tG$ has at least as many internal edges in its standard $r$-partition as $G$. It is therefore enough to show that $\tG$ also has at least as many crossing edges as $G$. We start with a lower bound for $\tG$.

\begin{prop}\label{yaycounting}
The number of crossing edges in $\tG$ is at least
\[
 \sum_{i<j} |V_i||V_j| - \left(\Delta^2 + b_1b_2 + (a_1+b_1+c_1)\kappa_2 + (k-a_1-b_1-c_1)\kappa_1 +  (a_1+a_2)\frac{\Delta}{2}\right).
\]
\end{prop}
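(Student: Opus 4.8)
The plan is to evaluate the number of crossing edges of $\tG$ exactly in its standard $r$-partition and then match it term by term against $\sum_{i<j}|V_i||V_j|$; after this, the proposition will reduce to an elementary estimate in the parameters $a_i,b_i,c_i,k,\Delta$.

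First I would fix the standard $r$-partition of $\tG$, with parts $W_1=Y_1\cup Z_1$, $W_2=X\cup Y_2\cup Z_2$ and $W_j=Z_j$ for $3\le j\le r$. Among the five pairs of blowup classes of $L_r$ that are non-adjacent, only $(X,Z_1)$ and $(Y_1,Y_2)$ are split between two different parts $W_i$, whereas $(X,Z_2)$, $(Y_1,Z_1)$ and $(Y_2,Z_2)$ are each internal to $W_1$ or $W_2$. Since all other pairs of classes are completely joined, the number of crossing edges of $\tG$ equals exactly
\[ \sum_{i<j}|W_i||W_j| - |X||Z_1| - |Y_1||Y_2| = \sum_{i<j}|W_i||W_j| - kn_1 - \Delta^2. \]
Writing $q=a_1+b_1+c_1$, the definitions of $n_1$ and $n_2$ give $|W_1|=|V_1|-q$, $|W_2|=|V_2|+q$ and $|W_j|=|V_j|$ for $j\ge 3$, and a one-line expansion of the sum of products yields $\sum_{i<j}|W_i||W_j| = \sum_{i<j}|V_i||V_j| + q(|V_1|-|V_2|-q)$.

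Combining these and substituting $|V_i|=\kappa_i+\Delta+2a_i+b_i$ and $n_1=\kappa_1+a_1-c_1$, the $\kappa_1$- and $\kappa_2$-terms cancel precisely against the term $q\kappa_2+(k-q)\kappa_1$ of the claimed error, so the proposition becomes equivalent to the $\kappa$-free inequality $q(a_1-2a_2-b_2-c_1)-k(a_1-c_1)+b_1b_2+(a_1+a_2)\frac{\Delta}{2}\ge 0$. To prove this I would rewrite its left side, using $b_1-q=-(a_1+c_1)$, as $-(k-q)a_1-2qa_2-a_1b_2-c_1b_2+(k-q)c_1+(a_1+a_2)\frac{\Delta}{2}$. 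The key observation is that $b_2\le\sum_{i\ge 2}(a_i+b_i+c_i)=k-q$, so $(k-q)c_1-c_1b_2\ge 0$ and these two terms can be discarded; bounding the remaining negative terms crudely by $(k-q)a_1\le ka_1$, $2qa_2\le 2ka_2$ and $a_1b_2\le ka_1$ then leaves at least $(a_1+a_2)(\frac{\Delta}{2}-2k)$, which is nonnegative since $\Delta\ge 6k$ by \eqref{eq:consts}.

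I expect the main obstacle to be purely organizational rather than conceptual. The naive term-by-term bounds on the error quantities are not sufficient on their own — they already fail when $a_1=a_2=0$ and $b_2>0$ — so one must carry the positive term $(k-q)c_1$ along and invoke $b_2\le k-q$ at exactly the right moment, and likewise be careful about the sign of $|V_1|-|V_2|-q$ (which may be negative) when expanding. Once this bookkeeping is arranged, every individual inequality used is trivial.
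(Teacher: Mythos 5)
Your proof is correct and follows essentially the same route as the paper: you compute the number of crossing edges of $\tG$ exactly in the standard $r$-partition (observing that only the pairs $(X,Z_1)$ and $(Y_1,Y_2)$ are split between parts), cancel the $\kappa_1,\kappa_2$ terms against $q\kappa_2+(k-q)\kappa_1$, and then bound the residual polynomial in $a_i,b_i,c_i,k$ by $(a_1+a_2)\Delta/2$ using $\Delta\ge 6k$. The paper groups that residual as $a_1(k+2a_2+b_2-a_1-b_1)+a_2(2b_1+2c_1)-c_1(k-b_1-b_2-c_1)\le 3k(a_1+a_2)$, which is the same observation you make (the negative $c_1$ term is discarded via $b_2\le k-q$, and the rest is $O(k)(a_1+a_2)$), just organized slightly differently.
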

\begin{proof}
First of all, as $|V_1\cup V_2| = |Z_1 \cup Z_2\cup Y_1\cup Y_2\cup X|$, and $|V_i|=|Z_i|$ for every $i\ge 3$, there are exactly $\sum_{i<j} |V_i||V_j| - |V_1||V_2|$ crossing edges in $\tG$ incident to $\bigcup_{i=3}^r Z_i$.

As for the edges induced by $Z = Z_1 \cup Z_2\cup Y_1\cup Y_2\cup X$, there are 
\begin{align*}
(|Y_1|+|Z_1|)(|X|+|Y_2|+|Z_2|) &= \big(|V_1| - (a_1+b_1+c_1)\big)\big(|V_2|+ (a_1+b_1+c_1)\big) \\
&= |V_1||V_2| - (|V_2|-|V_1| + a_1+b_1+c_1) (a_1+b_1+c_1)
\end{align*}
potential crossing edges in the standard $r$-partition of $\tG$ (see \Cref{fig:ptg}), out of which
\[
|Y_1||Y_2| + |X||Z_1| = \Delta^2 + k(\kappa_1+a_1-c_1)
\]
are missing. Here $|V_2|-|V_1| + a_1+b_1+c_1 = \kappa_2-\kappa_1 +2a_2 + b_2 - a_1+c_1$, so by rearranging, we get that the number of crossing edges in $\tG$ is
\[ 
\sum_{i<j} |V_i||V_j| - \big( \Delta^2 + b_1b_2 + (a_1+b_1+c_1)\kappa_2 + (k-a_1-b_1-c_1)\kappa_1 + \Lambda \big),
\]
where
\[
\Lambda = a_1(k+2a_2+b_2-a_1-b_1) + a_2(2b_1+2c_1) - c_1 (k-b_1-b_2-c_1) \le (a_1+a_2)\cdot 3k,
\]
where we used that $k=a+b+c\ge a_2+b_1+b_2+c_1$. The result then follows from $\Delta\ge 6k$.
\end{proof}

Recall that there are exactly $\sum_{i<j} |V_i||V_j|$ potential crossing edges in $G$. It therefore suffices to show that at least 
\begin{align}
	\Delta^2+b_1b_2 + (a_1+b_1+c_1)\kappa_2 + (k-a_1-b_1-c_1)\kappa_1 + (a_1+a_2)\frac{\Delta}{2} \label{wanted}
\end{align}
of them are missing from $G$.

It will be easier to split the graph into two, and bound the number of missing edges separately. Let $Q_i = P_i \cup V^B_i$ be the set obtained by extending $P_i$ with the vertices of the matching $B_i$ for every $i\in[r]$ (see \Cref{fig:struct}), so that $V^A_i, Q_i$ and $R_i$ partition $V_i$, and let $Q=\bigcup_{i\in [r]} Q_i$. We first count the number of crossing edges with both endpoints in $Q$, and then the ones with at most one end in $Q$.

\begin{lem}\label{doubleup}
$G$ misses at least $\Delta^2 + b_1b_2$ of the crossing edges induced by $Q$.
\end{lem}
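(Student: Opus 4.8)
Write $N_i := Q_i\setminus P_i$ for the $b_i$ non-$P_i$ endpoints of the matching edges in $B_i$, so that $Q_i=P_i\cup N_i$ with $|Q_i|=\Delta+b_i$, and note that $N_i$ is disjoint from $\bigcup_j P_j$ (it lies in $V_i\setminus P_i$). The two summands $\Delta^2$ and $b_1b_2$ will correspond, respectively, to missing crossing edges that lie inside $\bigcup_iP_i$ and to missing crossing edges having an endpoint in $N_1\cup N_2$; since these two families are disjoint, it suffices to find $\Delta^2$ of the first kind and $b_1b_2$ of the second.

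The first bound is immediate. As $\bigcup_iP_i\subseteq\Gamma(u)$ and $G$ is $K_{r+1}$-free, the induced subgraph $G[\bigcup_iP_i]$ is $K_r$-free (a copy of $K_r$ there, together with $u$, would be a $K_{r+1}$). Since this subgraph lies inside the complete $r$-partite graph with all parts of size $\Delta$, \Cref{folklore} gives at least $\Delta\cdot\Delta=\Delta^2$ missing crossing edges among $\bigcup_iP_i$, and as $P_i\subseteq Q_i$ these are crossing edges induced by $Q$.

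For the second bound, fix a $B_1$-edge $e=x_ey_e$ with $x_e\in P_1$, $y_e\in N_1$, and a $B_2$-edge $f=x_fy_f$ with $x_f\in P_2$, $y_f\in N_2$; the plan is to assign to each such pair $(e,f)$ a missing crossing edge with an endpoint in $N_1\cup N_2$, in an injective fashion. The key structural fact is that $x_e\sim y_e$ and $x_f\sim y_f$ are edges of $G$: for a transversal $p_3\in P_3,\dots,p_r\in P_r$ of the remaining parts, one can form an $(r+1)$-set by taking one of $\{x_e,y_e\}$ from $V_1$, one of $\{x_f,y_f\}$ from $V_2$, together with $p_3,\dots,p_r$ (and, in the all-$P$ choice, also $u$), and $K_{r+1}$-freeness forbids this from being a clique. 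Running through the possibilities for the forced non-edge, it is either of one of the forms $\{y_e,x_f\}$, $\{x_e,y_f\}$, $\{y_e,y_f\}$, $\{y_e,p_j\}$, $\{y_f,p_j\}$ — all crossing pairs inside $Q$ with an endpoint in $N_1\cup N_2$, hence not among the $\Delta^2$ edges above — or else it is a pair already inside $\bigcup_iP_i$ (one of the $\binom{r}{2}$ "old" types $\{p,p'\}$ or the pair $\{x_e,x_f\}$), which means \Cref{folklore} was not tight on $\bigcup_iP_i$ and we have spare missing edges to cover the deficit. Since $M$ is a matching, $y_e$ determines $e$ and $y_f$ determines $f$, so after a canonical choice the map $(e,f)\mapsto(\text{missing edge})$ can be made injective; combined with the dichotomy and with $\Delta\ge 6|S|\ge 6k\ge 6(b_1+b_2)$ from \Cref{Ddef}, this yields $b_1b_2$ distinct new missing crossing edges, completing the bound.

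The main obstacle is making this dichotomy quantitative: for each pair $(e,f)$ one must actually exhibit a non-edge that genuinely lies in $Q$ and meets $N_1\cup N_2$, rather than merely an "old" pair inside $\bigcup_iP_i$, and one must bookkeep the choices so that summing over the $b_1b_2$ pairs $(e,f)$ does not overcount. I expect this to be done by a short double count of transversal cliques in $Q_1\cup Q_2\cup P_3\cup\dots\cup P_r$: each missing crossing edge of the relevant kind destroys at most $\Delta+\max(b_1,b_2)\le 2\Delta$ such transversals, and comparing with the total number $(\Delta+b_1)(\Delta+b_2)\Delta^{r-2}$ of transversals — using again that $\Delta$ dominates $b_1,b_2$ — forces enough genuinely new missing edges (or enough slack inside $\bigcup_iP_i$) to reach $\Delta^2+b_1b_2$.
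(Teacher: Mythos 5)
Your first bound (the $\Delta^2$ missing crossing edges inside $\bigcup_i P_i$) is correct and matches the spirit of the paper's count. But the mechanism you propose for the $b_1b_2$ part does not work as stated, and the gap is not just bookkeeping. When you ``form an $(r+1)$-set by taking one of $\{x_e,y_e\}$, one of $\{x_f,y_f\}$, together with $p_3,\dots,p_r$,'' any choice involving $y_e$ or $y_f$ yields only $r$ vertices, and you cannot append $u$ since $y_e,y_f$ need not lie in $\Gamma(u)$ (they are merely matching endpoints outside $P_i$). An $r$-vertex set in a $K_{r+1}$-free graph is under no obligation to miss an edge, so these choices force nothing. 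Consequently you have no mechanism at all for producing missing edges incident to $N_1\cup N_2$, and the intended injection $(e,f)\mapsto(\text{missing edge meeting }N_1\cup N_2)$ cannot be built. Indeed the cleanly separated claim ``$\Delta^2$ missing edges inside $\bigcup_iP_i$ \emph{and} $b_1b_2$ missing edges meeting $N_1\cup N_2$'' is not true in general: it is entirely possible that every vertex of $N_1\cup N_2$ is adjacent to all of $Q$, in which case the slack must show up as extra missing edges \emph{inside} $\bigcup_iP_i$, not as new ones.

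The paper's proof handles exactly this interplay in one stroke, and the ingredient you are missing is to look at the full $(r+2)$-set $\{x_e,y_e,x_f,y_f,p_3,\dots,p_r\}$: a $K_{r+1}$-free graph on $r+2$ vertices must miss at least \emph{two} edges, and since $x_ey_e$ and $x_fy_f$ are edges of $M$ these two missing edges are both crossing. Combining these $(r+2)$-sets (your pairs $(e,f)$ with a $P$-transversal) with the $r$-sets $\{v_1,\dots,v_r\}$, $v_i\in P_i$, for which $v_1\notin V^B_1$ or $v_2\notin V^B_2$ (each of which, lying in $\Gamma(u)$, misses at least one crossing edge), and observing that any missing crossing edge is contained in at most $\Delta^{r-2}$ of these sets, a single weighted double count gives a total of at least $\Delta^r+b_1b_2\Delta^{r-2}$ incidences and hence at least $\Delta^2+b_1b_2$ missing crossing edges inside $Q$. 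This averaging absorbs the ``deficit'' automatically: when a bad pair $(e,f)$ fails to produce a non-edge meeting $N_1\cup N_2$, both of its two forced non-edges lie inside $\bigcup_iP_i$, which simply raises the count on that side. Your concluding double count over transversals of $Q_1\cup Q_2\cup P_3\cup\dots\cup P_r$ does not recover this, since such $r$-transversals again include $N$-vertices, so they are not forced to miss any edge, and the claimed multiplicity bound of $2\Delta$ is not the right quantity either.
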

\begin{proof}
We use a similar argument to the proof of \Cref{folklore}. Let $\cF$ be the family of all $r$-sets $\{v_1,\dots,v_r\}$ such that $v_i\in P_i$ for every $i=1,\dots,r$, but $v_1\notin V^B_1$ or $v_2\notin V^B_2$. Then $|\cF| = \Delta^r-b_1b_2\Delta^{r-2}$. Similarly, let $\cG$ be the family of all $(r+2)$ sets $\{v_1,\dots,v_r, v'_1,v'_2\}$ such that $v_1v_1'\in B_1$, $v_2v_2'\in B_2$, and $v_i\in P_i$ for every $i=3,\dots,r$. Then $|\cG| = b_1b_2\Delta^{r-2}$.

Recall that $P_1,\dots, P_r$ were all in the neighborhood of some vertex $u$. This means that there must be a (crossing) edge missing in $G[X]$ for every $X\in \cF$. Also, for $Y\in\cG$, $G[Y]$ is a $K_{r+1}$-free graph on $r+2$ vertices and so must be missing at least two edges. As $v_1v'_1$ and $v_2v'_2$ are both present in $G$, the missing edges in $G[Y]$ are also crossing.

Summing over the sets in $\cF\cup \cG$ gives at least $\Delta^r + b_1b_2\Delta^{r-2}$ missing crossing edges in total. It is easy to check that each missing edge $v_iv_j$ (or $v'_iv_j$ or $v'_iv'_j$) in $G$ is contained in exactly $\Delta^{r-2}$ sets from $\cF\cup\cG$, so $G[Q]$ misses at least $\Delta^2+b_1b_2$ crossing edges.
\end{proof}

\begin{lem}\label{restmissing}
$G$ misses at least 
\begin{equation} \label{eq:remain}
	(a_1+b_1+c_1)\kappa_2 + (k-a_1-b_1-c_1)\kappa_1 + (a_1+a_2)\Delta/2
\end{equation}
crossing edges with at most one endvertex in $Q$.
\end{lem}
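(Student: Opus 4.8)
The plan is to produce the missing crossing edges promised by \eqref{eq:remain} as two families that are disjoint from each other and from the family already used in \Cref{doubleup}. Recall that $V^A$, $Q$ and $R:=\bigcup_i R_i$ partition the vertex set, and that \Cref{doubleup} exhibits missing crossing edges lying entirely inside $Q$. So it suffices to find (i) at least $(a_1+b_1+c_1)\kappa_2+(k-a_1-b_1-c_1)\kappa_1$ missing crossing edges, each meeting $R$, and (ii) at least $(a_1+a_2)\Delta/2$ further missing crossing edges, each meeting $V^A$ but not $R$. Indeed, $R\cap Q=\emptyset$ and $V^A\cap Q=\emptyset$, the endpoint of each type-(ii) edge outside $V^A$ will lie in $\bigcup_j P_j\subs Q$, and type-(i) edges meet $R$ while type-(ii) edges do not; hence the three families are pairwise disjoint and their sizes add, all of them having at most one endpoint in $Q$ as required.

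For family (i), relabel so that $\kappa_1\le\kappa_2\le\dots\le\kappa_r$ and put $m(1)=2$ and $m(i)=1$ for $i\ge2$, so $m(i)$ minimises $\kappa_j$ over $j\ne i$; then $(a_1+b_1+c_1)\kappa_2+(k-a_1-b_1-c_1)\kappa_1=\sum_i|M_i|\,\kappa_{m(i)}$. The aim is: for each matching edge $e=xy\in M_i$ and each vertex $z\in R_{m(i)}$, name a missing crossing edge meeting $z$, doing so in a way that makes the named edges distinct. The mechanism uses that $x\sim y$ together with $K_{r+1}$-freeness: since $P_1,\dots,P_r\subs\Gamma(u)$, every transversal clique of $P_1,\dots,P_r$ would form a forbidden $K_{r+1}$ with $u$, and — via \Cref{folklore} and the size estimates \eqref{usizebound}, \eqref{eq:consts} — one should be able to pull out of this an $r$-clique $K=\{x,y\}\cup\{v_j:j\in[r]\setminus\{i,m(i)\}\}$ with $v_j\in P_j$. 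No $z\in R_{m(i)}$ is adjacent to all of $K$, so $z$ has a missing edge to $x$, to $y$, or to some $v_j$; such an edge is crossing (its endpoints lie in $V_{m(i)}$ and in a distinct part) and meets $R$. One then charges to $(e,z)$ a missing edge incident to $z$, preferring an edge to $x$ or $y$ and only falling back to a $v_j$ when $z\sim x,y$; with a little care this makes the charged edges distinct, giving $\sum_i|M_i|\,\kappa_{m(i)}$ missing crossing edges, each meeting $R$.

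For family (ii), fix $i\in\{1,2\}$ and $e=xy\in A_i$; as $A_i$ consists of matching edges avoiding $P_i$, we have $x,y\in V^A_i$. Re-using the clique $K$ from the previous paragraph but now letting the last vertex range over all of $P_{m(i)}$, every $p\in P_{m(i)}$ adjacent to each vertex of $K$ would yield a $K_{r+1}$; so of the $\Delta$ vertices of $P_{m(i)}$, after discarding those whose only non-neighbour in $K$ is one of the fixed $v_j$ (a set one must bound using the slack $\Delta\ge 6k$ and a good choice of the $v_j$), at least $\Delta/2$ are non-adjacent to $x$ or to $y$. Each of these is a missing crossing edge with one end in $V^A_i$ and one in $P_{m(i)}\subs\bigcup_jP_j$, hence not meeting $R$, so distinct from the type-(i) edges and from those of \Cref{doubleup}. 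Distinct matching edges of $A_1\cup A_2$ are vertex-disjoint, so these edges are all distinct, and summing over them gives at least $a_1\cdot\tfrac{\Delta}{2}+a_2\cdot\tfrac{\Delta}{2}=(a_1+a_2)\tfrac{\Delta}{2}$, completing \eqref{eq:remain}.

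I expect the main obstacle to be precisely the two points where structure must be extracted from $K_{r+1}$-freeness: first, showing that for every relevant matching edge $xy$ the $r$-clique $K=\{x,y\}\cup\{v_j\}$ exists — equivalently, ruling out the degenerate scenario in which the forbidden clique only ever forces missing edges \emph{inside} $\bigcup_j P_j$, which would both fail to touch $R$ or $V^A$ and risk clashing with \Cref{doubleup}; and second, controlling in family (ii) how many vertices of $P_{m(i)}$ have their unique non-neighbour among the fixed $v_j$. Both should be manageable by exploiting the minimum-degree and part-size bounds (\Cref{weeding}, \eqref{usizebound}, \Cref{Ddef}, \eqref{eq:consts}) and, where needed, choosing the transversal $(v_j)$ to maximise its interaction with $x$, $y$ and $P_{m(i)}$, absorbing the few exceptional vertices into the slack $\Delta\ge 6k$. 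The remainder is the disjointness bookkeeping sketched above.
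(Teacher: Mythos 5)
The central gap in your proposal is exactly where you flag it: the existence of the transversal $r$-clique $K=\{x,y\}\cup\{v_j\}$. You treat this as a technical matter to be "pulled out via \Cref{folklore} and the size estimates," but nothing in the hypotheses guarantees such a clique for a given matching edge $xy\in M_i$. Indeed, $x$ could be non-adjacent to \emph{every} vertex of some $P_j$, in which case $K$ does not exist and your entire charging for that matching edge collapses. The paper avoids this pitfall precisely by \emph{not} constructing cliques: it defines a deficit parameter $\tau$ (the smallest nonnegative integer so that every $vv'\in M'_i$ has at least $\kappa_{3-i}-\tau$ missing edges to $R\setminus R_i$), counts what the matching edges give with this deficit subtracted, and then shows that any positive deficit is repaid by an edge $vv'$ realizing it: that edge has many common neighbours in $R$, and \Cref{folklore} applied to those common neighbourhoods yields at least $k\tau$ missing edges \emph{inside} $R$. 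This two-sided dichotomy (either an edge contributes directly, or its failure to contribute forces a different kind of missing edge) is the essential idea your clique-based scheme is missing.

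A second gap: for family (ii) you discard the vertices $p\in P_{m(i)}$ whose only non-neighbour in $K$ is some fixed $v_j$, and claim this exceptional set can be absorbed into the slack $\Delta\ge 6k$. But nothing bounds that set by $O(k)$ — it could be almost all of $P_{m(i)}$ — so the claim that at least $\Delta/2$ vertices are non-adjacent to $x$ or $y$ does not follow. The paper handles the corresponding situation in the opposite direction: it classifies each $vv'\in A_i$ as "good" or "bad" according to how many missing edges $\{v,v'\}$ has to $(Q\cup R)\setminus(Q_i\cup R_i)$; good edges contribute directly, while for a bad edge one deduces $\ge\Delta/2$ common neighbours in each $P_j$ and then, via a triangle argument and the observation that a triangle's common neighbourhood must be thin in some $R_j$, produces $\ge k\Delta/2$ missing edges between $Q\setminus V^M$ and $R$. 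Finally, your charging for family (i) has an unresolved injectivity issue: two matching edges $e,e'$ with the same fallback vertex $v_j$ and the same $z\in R_{m(i)}$ would charge the same missing edge $zv_j$. The paper sidesteps all these collisions by counting missing edges by their location (between $V^M$ and $R$, between $Q\setminus V^M$ and $R$, or inside $R$) rather than by a per-pair charging scheme.
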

\begin{proof}
As a first attempt, we try to find a set of missing crossing edges for each matching edge in $M$ so that they are all disjoint and not induced by $Q$. More specifically, we want to show that for every edge $e\in M_1$, there are $\kappa_2$ missing edges between $e$ and $R=\bigcup_{i\in [r]} R_i$, and for every remaining edge $e\in M\setminus M_1$, there are $\kappa_1$ missing edges between $e$ and $R$. Moreover, for every $e\in A_1\cup A_2$, we want $\Delta/2$ additional missing edges between $e$ and $Q$. As $|M|=k$ and $|M_1|=a_1+b_1+c_1$, this would be exactly the amount we need.\footnote{The reader might find it helpful to check what the bound means when $G$ is a \ptg{r} graph: the $r$-partition $V_1\cup\dots\cup V_r$ is much like the standard $r$-partition, except the set $X$ might be split between $V_1$ and $V_2$. In any case, we always have $M = B_1\cup B_2$ (in particular, $a_1=a_2=0$), and every edge in $B_i$ contributes exactly $\kappa_{3-i}$ missing edges: one to each vertex of $R_{3-i}$.}

Of course, it may well be that some edge in $M$ is incident to fewer missing edges. Let $M'_1 = M_1$ and $M'_2 = M\setminus M_1$. To first bound the number of crossing edges between $M$ and $R$, we define $\tau$ to be the largest ``deficit'' in the above counting, i.e., the smallest \emph{nonnegative} integer such that for each $i=1,2$ and every edge $vv'\in M'_i$, there are at least $\kappa_{3-i}-\tau$ missing edges between $\{v,v'\}$ and $R\setminus R_i$.

To count the missing edges between $A_1\cup A_2$ and $Q$, we split $A_i$ into $A^g_i \cup A^b_i$ for each $i=1,2$ as follow. $A^g_i$ is the set of ``good'' edges $vv'$, such that there are at least $\kappa_{3-i}-\tau+\Delta/2$ edges missing between $\{v,v'\}$ and $(Q\cup R) \setminus (Q_i\cup R_i)$, and $A^b_i$ is the set of ``bad'' edges, where this is not the case. 

So far this gives at least
\[ 
 |A^g_1|(\kappa_2-\tau) + |A^g_2|(\kappa_1-\tau) + (|A^g_1| + |A^g_2|)\Delta/2
\] 
missing crossing edges between the good edges of $A$ and $Q\cup R$, and another
\[
 (|M_1'|-|A^g_1|)(\kappa_2-\tau) + (|M_2'|-A^g_2)(\kappa_1-\tau)
\]
between all other edges of $M$ and $R$. This is a total of
\begin{equation} \label{eq:stdedges}
 |M_1'|(\kappa_2-\tau) + |M_2'|(\kappa_1-\tau) + (|A^g_1| + |A^g_2|)\Delta/2
\end{equation}
missing edges between $V^M$ and $R$. To get \eqref{eq:remain}, we need to analyze the structure a bit. 

\medskip
Let $vv'\in A^b_i$ be some fixed bad edge for some $i$. Then there are at most $\kappa_{3-i}-\tau+\Delta/2$ missing edges from $\{v,v'\}$ to $(R\cup Q)\setminus (R_i\cup Q_i)$, and by the definition of $\tau$, at least $\kappa_{3-i}-\tau$ of these are incident with $R\setminus R_i$. So $vv'$ must have at least $\Delta/2$ common neighbors in each $P_j$ with $j\ne i$. In particular, as $k \le \Delta/6$ and hence $|V^M|=2k\le \Delta/3$, we get that for every $j\ne i$ there is a set $N_j\subs P_j\setminus (V^B_j\cup V^C_j)$ of at least $\Delta/6$ common neighbors in $P_j$ that is disjoint from $V^M$.

Choose $i'\ne i$ so that $\Gamma(v)\cap \Gamma(v')\cap R_{i'}$ is smallest. Then for every $j\ne i,i'$, 
\begin{align} \label{eq:commonneighbors}
 |\Gamma(v)\cap \Gamma(v')\cap R_j| &\ge \frac{ |\Gamma(v)\cap \Gamma(v')\cap (R_{i'}\cup R_j)|}{2} \nonumber \\
 &\ge \frac{(\kappa_{i'} + \kappa_j) - (\kappa_{3-i}+\Delta/2)}{2} \ge \frac{\kappa_2}{2} - \frac{\Delta}{4} \ge \frac{7\kappa_2}{16}
\end{align}
because there are at most $\kappa_{3-i}+\Delta/2$ missing edges from $\{v,v'\}$ to $R_{i'}\cup R_j$, and we also used $\kappa_{i'}+\kappa_j\ge \kappa_2+\kappa_{3-i}$ and $\Delta\le \kappa_2/4$, which follow from \eqref{eq:consts} for any distinct $i\in\{1,2\}$, $i'$ and $j$. 

\begin{obs}
	We may assume that every triangle induced by $V_i\cup V_i'$ has at most $\kappa_2/4$ common neighbors in some $R_j$ with $j\ne i,i'$.
\end{obs}
Indeed, the common neighborhood of this triangle is $\kappa_{r-2}$-free. The case $r\le 3$ is then vacuously true, so suppose $r \ge 4$. Then if the triangle has at least $\kappa_2/4$ common neighbors in every $R_j$ with $j\ne i,i'$, then by \Cref{folklore}, $G[R]$ misses at least $\kappa_2^2/16$ crossing edges. But $\kappa_2^2/16 \ge k(\kappa_2+\Delta) \ge \eqref{eq:remain}$, so we are done.

\medskip
This means that for the above bad edge $vv'$, we can assume that every triangle $vv'w$ with $w\in N_{i'}$ has at most $\kappa_2/4$ common neighbors in some $R_j$ with $j\ne i,i'$. Using \eqref{eq:commonneighbors}, we see that there are at least $\frac{7\kappa_2}{16}-\frac{\kappa_2}{4} = \frac{3\kappa_2}{16} > 4k$ missing edges between $w$ and $R\setminus (R_i\cup R_j)$. Summing over all $w\in N_{i'}$, we find at least
\begin{equation} \label{eq:badedges}
 4k\Delta/6 \ge k\Delta/2 \ge (|A^b_1| + |A^b_2|)\Delta/2
\end{equation}
missing edges between $Q\setminus V^M$ and $R$.

\medskip

If $\tau=0$, then we are already done: \eqref{eq:stdedges} and \eqref{eq:badedges} together give enough edges for \eqref{eq:remain}. So let us assume that $\tau>0$, i.e., there is an edge $vv'\in M'_i$ for some $i$ such that there are exactly $\kappa_{3-i}-\tau$ missing edges between $\{v,v'\}$ and $R\setminus R_i$.

Once again, choose $i'\ne i$ so that $\Gamma(v)\cap \Gamma(v')\cap R_{i'}$ is smallest. Then, similarly to \eqref{eq:commonneighbors},
\[
 |\Gamma(v)\cap \Gamma(v')\cap R_{i'}| \ge \kappa_{i'} - (\kappa_{3-i} - \tau) \ge \tau
\]
 and for every $j\ne i,i'$,
\[ 
 |\Gamma(v)\cap \Gamma(v')\cap R_j| \ge \frac{\kappa_{i'} + \kappa_j - (\kappa_{3-i} - \tau)}{2} \ge \frac{\kappa_2}{2}.
\]
By \Cref{folklore}, there must be at least
\begin{equation} \label{eq:Redges}
\frac{\kappa_2}{2} \cdot \tau \ge k\tau
\end{equation}
missing edges induced by $R$. Adding \eqref{eq:stdedges}, \eqref{eq:badedges} and \eqref{eq:Redges} together, we get \eqref{eq:remain}.
\end{proof}

Putting \Cref{yaycounting} and \Cref{doubleup,restmissing} together yields \Cref{prop:compare}, and finishes the proof of our main result.
\end{proof}

\section{Concluding remarks} \label{sec:conclusion}

With \Cref{pentstab} in hand, finding the exact \ptt graph $G$ that maximizes $D_r(G)$ assuming $e(G)\ge t_r(n) - \delta n^2$ is a matter of calculation. The result of Balogh, Clemen, Lavrov, Lidick\'y and Pfender \cite{BCLLP} shows that among \ptt graphs with $t_r(n)-\delta n^2$ edges, $D_r(G)$ is maximized when $x\approx \frac{2r}{3}\delta n$, $y\approx \sqrt{\frac{\delta}{3}}n$, $n_j\approx (\frac{1}{r}+\frac{2}{3}\delta)n$ for $j\ge 3$, and $n_i\approx (\frac{1}{r}-\frac{2(r-1)}{3}\delta - \sqrt{\frac{\delta}{3}})n$ for $i=1,2$, and the maximum is $D_r(G)\approx \frac{2r}{3\sqrt{3}}\delta^{3/2}n^2$.

\medskip

It would be very interesting to find exact stability results for other classes of graphs. Of course, this is generally a harder problem than determining the exact extremal graphs, which is often already a difficult task on its own.
A natural next step is to consider $H$-free graphs where $H$ is a graph with a critical edge, that is, there is an edge $e \in E(H)$ such that the deletion of $e$ from $H$ reduces the chromatic number. Examples of such graphs include cliques and odd cycles.

An old theorem of Simonovits \cite{S74} says that when $H$ is an $(r+1)$-chromatic graph with a critical edge, the Tur\'an graph $T_r(n)$ is the unique $H$-free graph maximizing the number of edges, provided $n$ is large enough. But even in this case, it seems unclear what the right conjecture should be for the set of $H$-free graphs $G$ that maximize $D_r(G)$ when $e(G)\ge t_r(n)-t$. We think that the theorem of Erd\H{o}s, Gy\H{o}ri and Simonovits should at least generalize to odd cycles in the following sense: Among $C_{2k-1}$-free graphs of close to extremal size, some $C_{2k+1}$-blowup is farthest from being bipartite.

Unfortunately, this might fail when the number of edges is very close to the extremal number. For example, let $G$ be the graph obtained from $C_6[1,1,1,1,n/2-2,n/2-3]$ by adding a vertex adjacent to the first three (singly blown up) vertices. Then $G$ is a $C_5$-free graph satisfying $D_2(G)=1$, but with strictly more edges than any blowup of $C_7$ (itself being a supergraph of the densest $C_7$-blowup). Nevertheless, we believe that the existence of such examples is an artifact of the small blowup factors, and $C_{2k+1}$-blowups are still optimal when the density of $G$ is bounded away from $1/4$.
\begin{conj}
Fix $k\ge 2$ and let $\delta$ be small enough. Then for any $\delta>\delta_0>0$ and large enough $n$, the following holds. For every $C_{2k-1}$-free graph $G$ on $n$ vertices with $(\frac{1}{4}-\delta_0)n^2\ge e(G)\ge (\frac{1}{4}-\delta)n^2$ edges, there is a $C_{2k+1}$-blowup $\tG$ satisfying $e(\tG)\ge e(G)$ and $D_2(\tG) \ge D_2(G)$.
\end{conj}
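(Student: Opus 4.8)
The plan is to follow the architecture of the proof of \Cref{thm:weakpentstab}, replacing every use of $K_{r+1}$-freeness by a use of $C_{2k-1}$-freeness. For $k=2$ the statement is implied by the $r=2$ case of \Cref{pentstab}, since $C_3=K_3$ and a \ptg{2} graph is a $C_5$-blowup, so assume $k\ge 3$. If $G$ is bipartite then $D_2(G)=0$ and $\tG=K_{\lfloor n/2\rfloor,\lceil n/2\rceil}$ (a $C_{2k+1}$-blowup with $2k-1$ empty parts) already works, using only that $n$ is large relative to $k$; so assume $G$ is not bipartite. The two-sided density hypothesis enters as follows: the upper bound $e(G)\le(\frac{1}{4}-\delta_0)n^2$ excludes the apex-of-small-blowup examples described above, and quantitatively it forces the relevant $C_{2k+1}$-blowups to have blow-up factors $\Omega_k(\delta_0 n)$, which lets me absorb lower-order error terms and dispenses with a separate very-dense case as in \Cref{sec:dense}.

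I would first run the weeding step of \Cref{weeding}: deleting a set $S$ with $|S|=O(\delta n)$ leaves $G-S$ of density still close to $\frac{1}{4}$ and with minimum degree $(\frac{1}{2}-o(1))n$. In place of \Cref{thm:aesos} I need that a $C_{2k-1}$-free graph with such minimum degree is bipartite: by an Andr\'{a}sfai--Erd\H{o}s--S\'{o}s-style argument, such a graph has no odd cycle shorter than $2k-1$ (a shortest one is induced, and if it had length $2\ell+1<2k-1$ one could replace one of its edges by an internally disjoint path of length $2k-1-2\ell$, available by density, to obtain a $C_{2k-1}$); in particular it is triangle-free, so \Cref{thm:aesos} with $r=2$ applies. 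Hence $G-S$ is bipartite; I extend its bipartition to a max-cut $2$-partition $V=V_1\cup V_2$ of $G$ and continue as in \Cref{sec:mainproof}: let $\Delta$ be the largest internal degree, show it is attained at some $u\in S$ and is of moderate size, extract an internal matching $M$ of size $k'=\ceil{e(H)/\Delta}$ (denoted $k'$ to avoid clashing with the cycle parameter) via \Cref{korandi}, fix $P_i\subseteq\Gamma(u)\cap V_i$ with $|P_i|=\Delta$, and split $M$ into the sets $A_i,B_i,C_i$.

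Two structural inputs change. Locally, the neighbourhood $\Gamma(v)$ of every vertex is $P_{2k-2}$-free — a path on $2k-2$ vertices inside $\Gamma(v)$ closes with $v$ into a $C_{2k-1}$ — so by the Erd\H{o}s--Gallai theorem it has at most $(k-2)|\Gamma(v)|$ edges; this replaces \Cref{folklore} and shows that $u$ already forces $\Delta^2-O(k\Delta)=(1-o(1))\Delta^2$ missing crossing edges. Likewise, the common neighbourhood $\Gamma(a)\cap\Gamma(b)$ of every edge $ab$ is $P_{2k-3}$-free. For the candidate I would take $\tG$ to be a $C_{2k+1}$-blowup built from the same data: a part of size $k'$ on one vertex of the cycle, its two neighbours blown up by $\Delta$, and the remaining $2k-2$ parts assigned sizes built from the $\kappa_i,a_i,b_i,c_i$ in the same way as the $n_j$ in \Cref{sec:mainproof}, so that $|V(\tG)|=n$ and (since deleting any edge of $C_{2k+1}$ makes it bipartite) $D_2(\tG)=k'\Delta\ge e(H)\ge D_2(G)$. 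One subtlety here, absent when $k=2$: the $\Delta^2$ term is no longer the dominant missing-edge contribution — the term linear in the $\kappa_i$ is — so the extra $\Delta$-scale parts of $\tG$ must be paid for against exactly those linear terms, and choosing how to spread the bulk over the $2k+1$ parts so that $e(\tG)$ remains at least the edge count forced on $G$ requires care.

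The crux, and the step I expect to resist adaptation, is the analogue of \Cref{restmissing}: producing, for each internal matching edge, about $\min(\kappa_1,\kappa_2)$ \emph{disjoint} missing crossing edges not already counted. For $k=2$ this is automatic, since an internal edge inside $V_1$ with a common neighbour in $V_2$ would be a triangle, which is forbidden; for $k\ge 3$ triangles (and all odd cycles shorter than $2k-1$) are allowed, so the deficit parameter $\tau$ of \Cref{restmissing} can be positive and needs a new treatment. The idea I would pursue: if an internal matching edge $vv'$ inside $V_1$ has many common neighbours $w\in R_2$, then each $vv'w$ is a triangle, and a $v$-$w$ (or $v'$-$w$) path of length $2k-3$ through the dense, near-complete bipartite graph between $V_1$ and $V_2$, avoiding the third vertex, would close it into a $C_{2k-1}$; such a path exists unless one of its endpoints has an unusually small neighbourhood in the opposite part, so one of $v,v',w$ must miss many crossing edges, which can then be charged to $vv'$. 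Making this quantitatively tight with the right $k$-dependent constants, organising the charging so that the counted edges are disjoint across matching edges, and coping with endpoints that lie in $S$ (which may have small degree), is where the bulk of the work lies — and, I believe, the reason the statement is still open.
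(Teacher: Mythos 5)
The statement you were assigned is stated in the paper as an open \emph{conjecture}: the authors propose it in the concluding remarks and supply no proof, so there is no ``paper's own proof'' to measure your attempt against. Your write-up is accordingly (and avowedly) a strategy sketch rather than a proof, and you should be aware that anyone splicing it in would be replacing an open problem with a claimed solution.

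As a sketch it is sensible and well-informed. You propose to transport the architecture of the proof of \Cref{thm:weakpentstab}, replacing \Cref{thm:aesos} by a reduction to triangle-freeness (a short odd cycle together with a long internally disjoint path, available from the high minimum degree after weeding, produces a $C_{2k-1}$), and replacing \Cref{folklore} by the Erd\H{o}s--Gallai bound applied to the $P_{2k-2}$-free neighbourhood of $u$ and the $P_{2k-3}$-free common neighbourhoods of matching edges. You also correctly identify that the $k=2$ case already follows from \Cref{pentstab} with $r=2$, and that the added upper bound on $e(G)$ is what rules out the apex-over-small-blowup examples discussed in the concluding remarks. The gap you flag is the right one: the analogue of \Cref{restmissing}. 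When $r=2$ and $G$ is triangle-free, the deficit $\tau$ is automatically zero --- an internal edge in $V_1$ has no common neighbour, so the two endpoints jointly miss at least $\kappa_2$ crossing edges, disjointly for different matching edges --- whereas a $C_{2k-1}$-free graph with $k\ge3$ can have internal edges with large common neighbourhoods, and the paper's $\tau>0$ analysis runs \Cref{folklore} inside those common neighbourhoods, which has no clean cycle analogue. Your proposed fix (close a triangle $vv'w$ into a $C_{2k-1}$ via an alternating path and charge the resulting forced missing edges to $vv'$) is the natural move, but you give no argument for disjointness of the charged edges across the matching, no treatment of endpoints or path vertices that fall in $S$, and no accounting that the extra small parts of the candidate $C_{2k+1}$-blowup can be paid for out of the deficit terms while keeping all blowup factors at least $\Delta$ so that $D_2(\tG)=k'\Delta$. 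These are precisely the obstructions you name, and with them unfilled the argument does not establish the conjecture --- which, as you yourself conclude, remains open.
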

Blowups of $C_{2k+1}$ might also be optimal for every 3-chromatic graph $H$ with a critical edge, whose shortest odd cycle has length $2k-1$. Such graphs are certainly $H$-free, and results of Roberts and Scott \cite{RS18} imply that the bound they give on $D_2(G)$ (with $e(G)$ fixed) is tight up to a constant factor. 

It is also tempting to guess that when $H$ is a general $(r+1)$-chromatic graph with a critical edge, then the optimum $D_r(G)$ is attained by complete $C_{2k+1}$-Tur\'an graphs (defined analogously to \ptt graphs by inserting a blowup of $C_{2k+1}$ into a part of a complete $(r-1)$-partite graph), where $k$ is some parameter depending only on $H$.

\medskip

A closely related problem, which served as the main motivation for the paper of Erd\H{o}s, Gy\H{o}ri and Simonovits \cite{erdoscan'tmaths}, is the old conjecture of Erd\H{o}s \cite{E76} claiming $D_2(G) \le \frac{n^2}{25}$ for every $K_3$-free graph $G$ on $n$ vertices. This trivially holds when $e(G)\le \frac{2n^2}{25}$, and was proved for $e(G)\ge \frac{n^2}{5}$ by Erd\H{o}s, Faudree, Pach and Spencer \cite{EFPS88}. If true, the conjecture is tight for a balanced blowup of $C_5$.

This problem led to further research into how far $K_{r+1}$-free graphs can be from being bipartite. Sudakov \cite{S07} proved a variant of the conjecture for 4-cliques, showing that $D_2(G)$ is maximized by $G=T_3(n)$ among $K_4$-free graphs. Sudakov conjectured that this generalizes to larger cliques (i.e., among $K_{r+1}$-free graphs, $D_2(G)$ is maximum when $G=T_r(n)$). A proof of this for $K_6$ has been announced by Hu, Lidick\'y, Martins, Norin and Volec \cite{HLMNV}. The remaining cases remain wide open.

\bibliographystyle{amsplain}


\begin{aicauthors}
\begin{authorinfo}[dk]
  D\'aniel Kor\'andi\\
  Mathematical Institute, University of Oxford\\
  Oxford, United Kingdom\\
  korandi\imageat{}maths\imagedot{}ox\imagedot{}ac\imagedot{}uk \\
  \url{https://korandi.org/}
\end{authorinfo}
\begin{authorinfo}[ar]
  Alexander Roberts\\
  Mathematical Institute, University of Oxford\\
  Oxford, United Kingdom\\
  robertsa\imageat{}maths\imagedot{}ox\imagedot{}ac\imagedot{}uk \\
  \url{https://people.maths.ox.ac.uk/robertsa/}
\end{authorinfo}
\begin{authorinfo}[as]
  Alex Scott\\
  Mathematical Institute, University of Oxford\\
  Oxford, United Kingdom\\
  scott\imageat{}maths\imagedot{}ox\imagedot{}ac\imagedot{}uk \\
  \url{https://people.maths.ox.ac.uk/scott/}
\end{authorinfo}
\end{aicauthors}

\end{document}